\renewcommand {\epsilon}{\varepsilon}
\newcommand{\CC}{\mathbb{C}}
\newcommand{\EE}{\mathbb{E}}
\newcommand{\PP}{\mathbb{P}}
\newcommand{\RR}{\mathbb{R}}
\newcommand{\bB}{\mathcal{B}}
\newcommand{\cC}{\mathcal{C}}
\newcommand{\fF}{\mathcal{F}}
\newcommand{\ffF}{\mathfrak{F}}
\newcommand{\fI}{\mathfrak{I}}
\newcommand{\fK}{\mathfrak{K}}
\newcommand{\fM}{\mathfrak{M}}
\newcommand{\e}{\varepsilon}
\newcommand{\la}{\lambda}
\newcommand{\si}{\sigma}
\newcommand{\lv}{\langle}
\newcommand{\rv}{\rangle}
\newcommand{\mto}{\mapsto}
\newcommand{\ra}{\rightarrow}
\newcommand{\lra}{\longrightarrow}
\newcommand{\ti}{\tilde}
\newcommand{\ind}{\mathbf{1}}
\newcommand{\lqq}{\leqslant}
\newcommand{\gqq}{\geqslant}
\newtheorem{thms}{Theorem}[section]
\newtheorem{thm}{Theorem}[section]
\newtheorem{prp}[thm]{Proposition}
\newtheorem{cor}[thm]{Corollary}
\newtheorem{lem}[thm]{Lemma}
\DeclareMathSymbol{\ophi}{\mathalpha}{letters}{"1E}
\renewcommand{\phi}{\varphi}
\newcommand{\be}{\begin{equation}}
\newcommand{\ee}{\end{equation}}
\newcommand{\ben}{\begin{equation*}}
\newcommand{\een}{\end{equation*}}
\newcommand{\ba}{\begin{equation}\begin{aligned}}
\newcommand{\ea}{\end{aligned}\end{equation}}
\DeclareMathOperator{\dist}{dist}
\DeclareMathOperator{\diag}{diag}
\newenvironment{proof}{\par\noindent{\bf Proof:}}{\hfill$\blacksquare$\par}
\newfont{\cyrfnt}{wncyr10}
\def\J3{\cyrfnt{\rm \u{\cyrfnt I}}}
\def\j3{\cyrfnt{\rm \u{\cyrfnt i}}}
\definecolor{DarkGreen}{rgb}{0.1,0.7,0.3}   
\definecolor{DarkGreen}{rgb}{0.1,0.7,0.3}   
\begin{document}
\title{
Averaging along foliated L\'evy diffusions
}



\author{Michael H\"ogele\footnote{Institut f\"ur Mathematik, Universit\"at Potsdam, Germany; hoegele@uni-potsdam.de}
\hspace{2cm}
Paulo Ruffino\footnote{IMECC, Universidade Estadual de Campinas, Brazil; ruffino@ime.unicamp.br} 
}

 \maketitle

\begin{abstract}
This article studies the dynamics of the 
strong solution of a SDE driven by a discontinuous L\'evy process 
taking values in a smooth foliated manifold with compact leaves.
It is assumed that it is \textit{foliated} in the sense that
its trajectories stay on the 
leaf of their initial value for all times a.s.. 
Under a generic ergodicity assumption for each leaf, 
we determine the effective behaviour of the system 
subject to a small smooth perturbation of order $\e>0$, 
which acts transversal to the leaves.  
The main result states that, on average,  
the transversal component of the perturbed SDE converges uniformly 
to the solution of a deterministic ODE as $\e$ tends to zero. 
This transversal ODE is generated by the average of the perturbing vector field 
with respect to the invariant measures of the unperturbed system and 
varies with the transversal height of the leaves. 
We give upper bounds for the rates of convergence 
and illustrate these results for the random rotations on the circle.  
This article 
complements the results by Gargate and Ruffino 
for SDEs of Stratonovich type to general L\'evy driven SDEs of Marcus type. 
\end{abstract}

\noindent \textbf{Keywords:} averaging principle; 
L\'evy diffusions on manifolds; foliated spaces; 
Marcus canonical equation\\

\noindent \textbf{2010 Mathematical Subject Classification: } 60H10, 60J60, 60G51, 58J65, 58J37.


\section{Introduction}
This article generalizes an averaging principle 
established for continuous semimartingales in Gonz\'ales and Ruffino \cite{GR13} to L\'evy diffusions 
containing a jump component. 
The system under consideration is the strong solution of a stochastic differential equation (SDE)
driven by a discontinuous L\'evy noise with values in a smooth Riemannian manifold $M$ 
equipped with a foliation structure $\fM$. 
That means there exists an equivalence relation on $M$, which defines a family of 
immersed submanifolds of constant dimension~$n$, which cover $M$. 
The elements of $\fM$ are called the leaves of the foliation. 
Moreover the solution flow of the SDE is assumed to respect the foliated structure $\fM$ 
in the sense that each of the discontinuous solution paths of the SDE stays 
on the corresponding leaf of its initial condition for all times almost surely. 
We further assume the existence of a unique invariant measure for the SDE on each leaf. 

If this system is perturbed by a smooth deterministic vector field $\e K$ 
transversal to the leaves with intensity $\e>0$, 
the foliated structure of the solution is destroyed due to the appearance 
of a (smooth) transversal component in the trajectories. 
We study the effective behaviour of this transversal component in the limit as $\e$ tends to $0$. 

The main idea is the following. 
Consider the solution along the rescaled time $t/\e$, 
its foliated component approximates the ergodic average behaviour for small $\e$. 
Hence the essential transversal behaviour is captured by an ODE for the transversal component 
driven by the  vector field $K$ instead of $\e K$, which is averaged by the ergodic invariant measure on the leaves.   
Note that the intensity of the original perturbation $\e K$ cancels out by the time scaling $t/\e$. 
This is the result of Theorem~\ref{thms: main result} and will be referred to as an averaging principle. 
Our calculations here also determine upper bounds for the rates of convergence and a probabilistic robustness result. 

The heuristics of an averaging principle consists in replacing 
the fine dynamical impact of a so-called fast variable on the dynamics 
of a so-called slow variable by its averaged statistical static influence.  
For references on the vast also classical literature on averaging for 
deterministic systems see e.g. the books by V. Arnold \cite{V-Arnold} and 
Saunders, Verhulst and Murdock \cite{SVM} and the numerous citations therein. 
For stochastic systems among many others we mention the book by 
Kabanov and Pergamenshchikov \cite{Kabanov-Pergamenshchikov} and the references therein 
which gives an excellent overview on the subject. See also \cite{Borodin-Freidlin, Khasminski-krylov}.
An inspiration for this article also goes back to the work  \cite{Li} by Li, 
where she established an averaging principle for the particular case of 
completely integrable (continuous) stochastic Hamiltonian systems. 
In Gonz\'ales and Ruffino \cite{GR13} these results have been generalized 
to averaging principles for perturbations of Gaussian diffusions on foliated spaces. 
This article completes this result for general L\'evy driven foliated diffusions. 

The article is organized as follows. In the next Section we present 
the main result and an example to illustrate the 
averaging phenomenon. Section 3 is dedicated to 
the fundamental technical Proposition \ref{lem: preliminary}, 
where the stochastic Marcus integral technique is applied and whose estimates turn out to be the 
basis for the rates of convergence of the main theorem. Section 4 deals with 
the averaging on the leaves. 
In Section 5 we prove the main theorem. Further details of the calculations in 
the example of Section 2.3 are provided in an Appendix.

\section{Object of study and main results} 

\subsection{The set up } 
Let $M$ be a connected smooth Riemannian manifold with an $n$-dimensional 
smooth foliation $\fM$. Given an initial 
condition $x_0\in M$, we assume that there exists a bounded neighbourhood 
$U\subset M$ of the corresponding compact leaf 
$L_{x_0}$ such that there exists a diffeomorphism $\varphi: U 
\rightarrow L_{x_0}\times V$, where $V\subset \RR^d$ is a connected open set 
containing the origin. The neighbourhood $U$ is taken small enough 
such that the derivatives of $\varphi$ are 
bounded (say, $U$ is precompact in $M$). The second coordinate, called the 
vertical coordinates of a point 
$q \in U$ will be denoted by the  projection $\Pi: U \rightarrow V$ with 
$\Pi(q)\in V$, i.e. 
$\varphi(q)=(u, \Pi(q))$ for some $u\in L_{x_0}$.  
Hence for any fixed $v\in V$, the inverse image $\Pi^{-1}(v)$ is the compact 
leaf $L_x$, where $x$ is any point in $U$ such that the 
vertical projection satisfies $\Pi(x)=v$. The 
components of the vertical projection are denoted by
\[
 \Pi(q) = \Big(\Pi_1(q), \ldots , \Pi_d(q) \Big) \in V \subset \RR^d,
\]
for any $q\in U$. 

We are interested in a L\'evy driven SDE with values in $M$ embedded in an 
Euclidean space, 
whose solutions respect the foliation. 
Since such a solution necessarily satisfies a canonical Marcus equation, 
also known as generalized Stratonovich equation in the sense of Kurtz, Pardoux and Protter \cite{KPP95},  
we consider the stochastic differential equation 
\begin{equation}\label{eq: SDE}
d X_t = F_0(X_t) dt + F(X_t) \diamond d Z_t, \qquad X_0 = x_0,
\end{equation}
which consists of the following components. 
\begin{enumerate}
\item Let $F \in \cC^1(M; L(\RR^{r}; T\fM))$, such that the map $x\mapsto F(x)$ is $\cC^1$ 
such that for each $x\in M$ the linear map $F(x)$ sends a vector $z\in \RR^r \mto F(x) z \in T_x L_x$ 
to the respective tangent space of the leaf. 
Furthermore we assume that $F$ and $(DF)F$ are globally Lipschitz continuous on $M$ 
with common Lipschitz constant $\ell>0$. 
We write $F_i(x)= F(x) e_i$, for $e_i$ the canonical basis of $\RR^r$. 
 \item Here $Z = (Z_t)_{t\gqq 0}$ with $Z_t = (Z^1_t, \dots, Z^r_t)$ is a L\'evy process in $\RR^r$ with respect to 
a given filtered probability space $(\Omega, \fF, (\fF_t)_{t\gqq 0}, \PP)$ with characteristic triplet $(0, \nu, 0)$. 
It is assumed that the filtration satisfies the usual conditions in the sense of Protter \cite{Pr04}.
It is a consequence of the L\'evy-It\^o decomposition of~$Z$ 
that $Z$ is a pure jump process with respect to a L\'evy measure $\nu: \bB(\RR^r) \ra [0, \infty]$ 
which satisfies $\nu\{0\} = 0$ and permits the 
existence of an exponential moment of order $\kappa > \ell$ given by 
\begin{equation}\label{eq: exponential moment}
\int_{\RR^r} \big(e^{\kappa \|y\|}\wedge \|y\|^2\big)\; \nu(dy)< \infty.
\end{equation}
where $\ell$ the Lipschitz constant of the function $x \mapsto F(x)$ introduced in point 1. 
For details we refer to the monographs of Sato \cite{Sa99} or Applebaum \cite{Ap09}.
 \item Equation (\ref{eq: SDE}) is defined as 
\begin{equation}\label{eq: SDE unperturbed integral form}
X_t = x_0 + \int_0^t F_0(X_s) ds + \int_0^t F(X_{s-}) d Z_s + \sum_{0 < s\lqq t} (\Phi^{F \Delta_s Z}(X_{s-})-X_{s-}- F(X_{s-}) \Delta_s Z),
\end{equation}
where the function $\Phi^{Fz}(x) = Y(1, x ; Fz)$ and $Y(t, x; Fz)$ stands for 
the solution of the ordinary differential equation 
\begin{equation}\label{eq: increment ode}
\frac{d}{d\si } Y(\si) = F(Y(\si)) z,  \qquad \mbox{ with initial condition } 
Y(0) = x \in M, \quad z\in \RR^r. 
\end{equation}
\end{enumerate}
This article studies the situation where a foliated SDE is perturbed by a
transversal smooth vector field $\e K$ with $\e>0$ in the limit for $\e \searrow 0$. 
\begin{enumerate}
 \item[4.] For $K: M \ra TM$ a smooth, globally Lipschitz continuous vector field 
we denote by $X^\e$, $\e>0$ the solution of the perturbed system 
\begin{equation}\label{eq: SDE perturbed}
d X^\e_t = F_0(X^\e_t) dt +  F(X_t) \diamond d Z_t + \e K(X^\e_t) dt, \qquad X^\e_0 = x_0.
\end{equation}
\end{enumerate}

\begin{thms}[\cite{KPP95}, Theorem 3.2 and 5.1]
\begin{enumerate}
\item Under the previous assumptions notably item \mbox{1.- 3.,} there is a unique semimartingale $X$ 
which is a strong global solution of (\ref{eq: SDE}) 
in the sense of equation (\ref{eq: SDE unperturbed integral form}). It has a c\`adl\`ag version and 
is a (strong) Markov process. 
\item Under the previous assumptions in particular item \mbox{1.-4.,} there is a unique semimartingale $X^\e$ 
which is a strong global solution of equation (\ref{eq: SDE perturbed}) in the 
sense 
of equation (\ref{eq: SDE unperturbed integral form}), where $F_0$ is replaced by $F_0 +\e K$. 
The perturbed solution $X^\e$ has c\`adl\`ag paths almost surely and is a (strong) Markov process. 
\end{enumerate}
\end{thms}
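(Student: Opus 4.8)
The plan is to reduce the second assertion to the first and then to establish existence, uniqueness and the Markov property for the Marcus equation \eqref{eq: SDE} by a Picard iteration tailored to the jump correction term. For the reduction, I would observe that $F_0+\e K$ is globally Lipschitz whenever $F_0$ and $K$ are, with constant at most $\ell+\e\,\mathrm{Lip}(K)$, so equation \eqref{eq: SDE perturbed} written in the integral form \eqref{eq: SDE unperturbed integral form} with $F_0$ replaced by $F_0+\e K$ falls verbatim under part~1; it therefore suffices to treat \eqref{eq: SDE}. I would fix an embedding $M\hookrightarrow\RR^{N}$, extend $F_0$, $F$ and the time-one flow $(x,z)\mto\Phi^{Fz}(x)$ smoothly to a tubular neighbourhood of $M$ while keeping the global Lipschitz bounds, solve the resulting equation in $\RR^{N}$, and only at the end verify that the solution stays on $M$.

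The backbone will be two deterministic estimates for the flow $\Phi^{Fz}(x)=Y(1,x;Fz)$ of the vector field $y\mto F(y)z$. Applying Gronwall to \eqref{eq: increment ode} with the Lipschitz bound on $F$ gives $\|\Phi^{Fz}(x)-x\|\lqq C\|z\|e^{\ell\|z\|}$ and $\|\Phi^{Fz}(x)-\Phi^{Fz}(y)\|\lqq\|x-y\|e^{\ell\|z\|}$; the additional Lipschitz bound on $(DF)F$, which controls $\partial_\sigma^{2}Y(\sigma,x;Fz)$, upgrades these to $\|\Phi^{Fz}(x)-x-F(x)z\|\lqq C\|z\|^{2}e^{\ell\|z\|}$ and $\|[\Phi^{Fz}(x)-x-F(x)z]-[\Phi^{Fz}(y)-y-F(y)z]\|\lqq C\|x-y\|(\|z\|\wedge\|z\|^{2})e^{\ell\|z\|}$. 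This is precisely where the exponential moment \eqref{eq: exponential moment} with $\kappa>\ell$ enters: it yields $\int_{\RR^{r}}\|z\|^{k}e^{\ell\|z\|}\,\nu(dz)<\infty$ for $k=1,2$, which is what makes the jump correction $\sum_{0<s\lqq t}(\Phi^{F\Delta_s Z}(X_{s-})-X_{s-}-F(X_{s-})\Delta_s Z)$, rewritten against the Poisson random measure of $Z$, a square-integrable process depending Lipschitz-continuously on the path $X$; estimates of this flavour recur in Proposition~\ref{lem: preliminary}.

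With these in hand I would run a Picard iteration on the Banach space of $(\fF_t)$-adapted c\`adl\`ag processes on $[0,T]$ normed by $\|X\|=(\E\sup_{s\lqq T}\|X_s\|^{2})^{1/2}$, the iteration map $\Gamma$ being the right-hand side of \eqref{eq: SDE unperturbed integral form} (rewritten so that the It\^o integral against $Z$ and the jump correction combine into one $L^{2}$-martingale plus an absolutely convergent finite-variation term). Bounding the martingale part by Burkholder--Davis--Gundy, the drift by the Lipschitz property of $F_0$, and the jump terms by the four estimates above together with \eqref{eq: exponential moment}, one obtains $\E\sup_{s\lqq t}\|\Gamma X_s-\Gamma X'_s\|^{2}\lqq C\int_0^{t}\E\sup_{u\lqq s}\|X_u-X'_u\|^{2}\,ds$; iterating this contraction estimate produces a unique fixed point, which is the unique strong solution, and the same Gronwall computation applied to $\E\sup_{s\lqq t}\|X_s\|^{2}$, using the linear growth implied by the Lipschitz hypotheses, rules out explosion and extends the solution to $[0,\infty)$. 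The c\`adl\`ag property is inherited because each iterate is c\`adl\`ag and the convergence is uniform in $t$ in $L^{2}$.

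For the Markov properties I would use pathwise uniqueness to get the flow identity $X_t^{s,x}=X_t^{u,X_u^{s,x}}$ for $s\lqq u\lqq t$; combined with the stationary independent increments of $Z$ and the time-independence of the coefficients this gives the simple Markov property with a homogeneous transition semigroup, and the strong Markov property then follows from the c\`adl\`ag paths and the $L^{2}$-continuity of $x\mto X^{0,x}$ (again a by-product of the Gronwall estimate), or directly from the general theory of L\'evy-driven SDEs in \cite{KPP95, Pr04}. That $X$ remains on $M$ follows from the structure of \eqref{eq: SDE unperturbed integral form}: since $Z$ has characteristic triplet $(0,\nu,0)$ there is no continuous martingale part, so between consecutive jumps $X$ solves $\dot X=F_0(X)$ on $M$, while at a jump time $s$ the It\^o term and the correction term collapse to the single increment $X_s=\Phi^{F\Delta_s Z}(X_{s-})=Y(1,X_{s-};F\Delta_s Z)\in M$ because $y\mto F(y)\Delta_s Z$ is tangent to $M$; localising over the finitely many large jumps on each $[0,T]$ makes this rigorous. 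The hard part throughout will be the jump correction term itself, which is genuinely superlinear in the jump size: the Lipschitz fixed-point scheme closes only thanks to the \emph{strict} inequality $\kappa>\ell$ in \eqref{eq: exponential moment}, and one must split $\nu$ at a fixed radius, treating small jumps with the $\|z\|^{2}e^{\ell\|z\|}$ bounds and the compensator while handling the (finitely many) large jumps pathwise via the flow bound $e^{\ell\|z\|}$.
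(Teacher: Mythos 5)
The paper does not prove this theorem at all: it is quoted wholesale from Kurtz--Pardoux--Protter \cite{KPP95} (Theorems 3.2 and 5.1), and your reconstruction follows essentially the same route as that source --- Gronwall/flow estimates for the Marcus correction $\Phi^{Fz}(x)-x-F(x)z$ of order $\|z\|^{2}e^{\ell\|z\|}$, rewriting the correction sum against the Poisson random measure of $Z$, a Lipschitz fixed-point argument, and the Markov property from the flow identity plus independent increments of $Z$; the reduction of part~2 to part~1 by absorbing $\e K$ into the drift is exactly how the paper uses the citation. Two caveats. First, your argument that $X$ stays on $M$ because ``between consecutive jumps $X$ solves $\dot X=F_0(X)$'' is only literally meaningful for finite-activity $Z$; for infinite activity the small jumps cannot be ordered consecutively, and one must approximate by truncating jumps below a threshold and pass to the limit --- this is precisely the content of the support theorem (Proposition 4.3 of \cite{KPP95}) which the paper invokes \emph{separately, after} this theorem to obtain invariance of $M$ and of the leaves; note that invariance is not actually part of the quoted statement. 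Second, your (and the paper's) reliance on the exponential moment \eqref{eq: exponential moment} is looser than stated: closing the $L^{2}$ (or $L^{p}$) estimates for the correction term requires integrability of $\|z\|^{2p}e^{pC\|z\|}$ against $\nu$, for which the bare condition $\kappa>\ell$ need not suffice; this is an imprecision inherited from the paper rather than an error of your own.
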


With the previously mentioned embedding results in mind 
we are now in the position to apply Proposition 4.3 in Kurtz, Pardoux and Protter \cite{KPP95}, 
which states the following support theorem. Under the aforementioned conditions 
$\PP(X_0 \in M) =1$, it  implies that $\PP(X_t \in M, \quad \forall t\gqq 0) = 
1$. 
The same result applied again on the leaves yields in particular 
that each solution is \textit{foliated} in the sense 
that it stays on the leaf of its initial condition, i.e. 
$\PP(X_0 \in L_{x_0}) =1$ implies that $\PP(X_t \in L_{x_0},\quad \forall t\gqq 
0) = 1$. 
We shall call an SDE of the type (\ref{eq: SDE}) which admits a foliated solution 
a \textit{foliated stochastic differential equation}.

\subsection{The main results} 

We assume that each leaf $L_{q}\in \fM$ passing through $q \in M$ 
has associated a unique invariant measure $\mu_{q}$ of the unperturbed 
foliated 
system (\ref{eq: SDE}) with initial condition $x_0 = q$. 

Let $\Psi: M\ra \RR$ be a differentiable function. 
We define the average of $\Psi$ with respect to $\mu_q$ in the following way. 
If $v$ is the vertical coordinate of $q$, that is $\phi(q) = (u,v)$, we 
define 
\begin{equation}\label{def: average}
Q^\Psi(v) := \int_{L_q} \Psi(y) \mu_q(dy).  
\end{equation}
With respect to the coordinates given by $\phi$, the perturbing vector field 
$K$ is written as 
\[ 
 d\Pi(K)= \big(d\Pi_1 (K), \ldots, d\Pi_d(K)).
\]

\begin{enumerate}
 \item[\textbf{Hypothesis 1:} ] For $i=1, \dots, d$ the function 
\begin{equation}\label{eq: average}
v \mapsto Q^{d\Pi_i K}(v)
\end{equation}
is globally Lipschitz continuous. 
\end{enumerate}
This ensures in particular that for each $w\in \RR^d$ the ordinary differential equation 
\begin{equation}\label{def: w}
\frac{d w}{dt}(t) = (Q^{d\Pi_1( K)}, \dots,  Q^{d\Pi_d( K)})\left(w (t)\right), \qquad w(0) = w
\end{equation}
has a unique global solution. 

We are going to consider the average on the leaves of each real valued 
functions $d \Pi_i(K)$, with $i=1, \ldots , d$. In general, there is no 
standard rate of convergence in the ergodic 
theorem, see e.g. Krengel \cite{Krengel}, Kakutani and Petersen 
\cite{Kakutani-Petersen}. We are going to consider a prescribed rate of 
converge in time average, following the classical 
approach as in Freidlin and Wentzell 
\cite[Chap. 7.9]{Freidlin-Wentzell}, where they deal with an averaging 
principle with convergence in probability. 

\begin{enumerate}
 \item[\textbf{Hypothesis 2: }] For any $x_0\in M$ and $p\gqq 2$ there exists a 
positive, bounded, decreasing function $\eta: [0,\infty) \ra [0,\infty)$ 
 which estimates the rate of 
convergence of the ergodic unperturbed 
dynamic in  each leaf  $L_{x_0}$ in the sense that for all $i \in \{1, \dots, d\}$
\begin{equation} \label{def: function eta}
\left( \mathbb{E} \left| \frac{1}{t}\int_0^t d\Pi_i K (X_s(x_0) )\, ds - 
Q^{d\Pi_i (K)} (\Pi (x_0)) \right|^p \right)^{\frac{1}{p}} \lqq \eta(t), \qquad \mbox{ for all }t\gqq 0.
\end{equation}
\end{enumerate}

%

For $\e>0$ and $x_0\in M$ let $\tau^\e$ be the first exit time of 
the solution $X^\e(x_0)$ of equation (\ref{eq: SDE perturbed}) 
from the aforementioned foliated coordinate neighborhood $U$. 

\begin{thms}\label{thms: main result}
Assume that the unperturbed foliated system (\ref{eq: SDE}) on $M$ satisfies Hypothesis 1 and~2. 
Let $w$ be the solution of the deterministic ODE in the transversal component $V\subset \RR^n$. 
\begin{equation}\label{def: w}
\frac{d w}{dt}(t) = (Q^{d\Pi_1( K)}, \dots,  Q^{d\Pi_d( K)})\left(w (t)\right),
\end{equation}
with initial condition $w(0) = \Pi(x_0) = 0$. Let $T_0$ be the time that $w(t)$ reaches the boundary of $V$. 

Then we have that:
\begin{enumerate}
 \item For all $0< t< T_0$, $\beta \in (0, \frac{1}{2})$, $p\gqq 2$ and $\la < 1$ 
\begin{align*}
\left(\EE\left[\sup_{s\lqq t} |\Pi\big(X^\e_{\frac{s}{\e}\wedge \tau^{\e}}\big) 
- w(s)|^p\right]\right)^{\frac{1}{p}} 
\lqq t \left[ \e^{\la} h(t, \e) +  \ \eta \left( t |\ln \epsilon |^{\frac{2\beta}{p}} \right)\, \exp{\{Ct\}} \right], 
\end{align*}
where $h(t,\e)$ is continuous and converges to zero, when $\e$ or $t$ do so and 
$C$ is a positive constant.
 \item For $\gamma > 0$, let 
\[
T_\gamma := \inf\{t>0~|~\dist(w(t), \partial V) \lqq \gamma\}. 
\]
The exit times of the two systems satisfy the estimates 
\[
\PP(\e \tau^{\e} < T_\gamma) \lqq \gamma^{-p}T_\gamma^p \left[ \e^{\la} 
h(T_\gamma, \e) +  \ \eta 
\left( T_\gamma |\ln \epsilon 
|^{-\frac{2\beta}{p}} \right) e^{CT_\gamma} \right]^p.  
\]
\end{enumerate}
\end{thms}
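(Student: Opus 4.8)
The plan is to combine the Marcus-calculus estimates of Proposition~\ref{lem: preliminary} with the ergodic rate $\eta$ from Hypothesis~2 via a time-discretization (a ``Freidlin--Wentzell-type'' argument), and then to turn the resulting moment bound on the transversal component into the exit-time estimate by Chebyshev's inequality.

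\textbf{Step 1: Reduce to the transversal coordinates.} Work inside the chart $\phi:U\to L_{x_0}\times V$ and apply the Marcus change-of-variables formula (the canonical analogue of It\^o--Stratonovich) to the real-valued functions $\Pi_i$ composed with $X^\e_{t/\e}$, up to the exit time $\tau^\e$. Because $F(x)z\in T_xL_x$ is tangent to the leaves, the driving noise $Z$ does not move the vertical coordinate: $d\Pi_i(F_j)=0$, and likewise the Marcus jump correction $\Phi^{Fz}$ fixes vertical coordinates. Hence, after the time change $t\mapsto t/\e$ (which turns $\e K\,dt$ into $K\,dt$), one gets, for $s<\e\tau^\e$,
\begin{equation*}
\Pi_i\big(X^\e_{s/\e}\big) = \Pi_i(x_0) + \int_0^{s/\e} d\Pi_i(\e K)\big(X^\e_r\big)\,dr = \int_0^{s} d\Pi_i(K)\big(X^\e_{r/\e}\big)\,dr,
\end{equation*}
so the vertical motion is purely of bounded variation and of order one in $s$. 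This is exactly the heuristic in the introduction; the only subtlety is controlling the transversal drift by the \emph{frozen-leaf} average.

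\textbf{Step 2: Freeze the leaf, average, and discretize.} Compare the true vertical drift $d\Pi_i(K)(X^\e_{r/\e})$ with the averaged field $Q^{d\Pi_i(K)}(w(r))$. Partition $[0,t]$ into $N$ subintervals of length $\delta=t/N$ and, on each piece $[t_k,t_{k+1}]$, replace $X^\e_{r/\e}$ by the solution $\bar X^{k}$ of the \emph{unperturbed} foliated SDE started at $X^\e_{t_k/\e}$ and run for the fast time $(r-t_k)/\e$. Proposition~\ref{lem: preliminary} provides the $L^p$-distance between the perturbed and the frozen unperturbed trajectories over a time window of length $\delta/\e$ (this is where the factors $\e^\lambda h(t,\e)$ and the constant $C$ in $\exp\{Ct\}$ come from: a Gr\"onwall iteration across the $N$ blocks). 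On each frozen block, Hypothesis~2 gives
\begin{equation*}
\Big(\EE\Big|\tfrac{1}{\delta/\e}\int_0^{\delta/\e} d\Pi_i(K)(\bar X^k_r)\,dr - Q^{d\Pi_i(K)}(\Pi(X^\e_{t_k/\e}))\Big|^p\Big)^{1/p}\lqq \eta(\delta/\e).
\end{equation*}
Finally the Lipschitz continuity of $v\mapsto Q^{d\Pi_i(K)}(v)$ (Hypothesis~1) lets us replace $\Pi(X^\e_{t_k/\e})$ by $w(t_k)$ at the cost of a term proportional to the running error, which is absorbed into the Gr\"onwall step and produces the $e^{Ct}$ factor. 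Summing the $N$ block errors and using $\sup_{s\le t}$ with Doob/Jensen where needed yields the bound
\begin{equation*}
\Big(\EE\big[\sup_{s\le t}|\Pi(X^\e_{s/\e\wedge\tau^\e})-w(s)|^p\big]\Big)^{1/p}\lqq t\big[\e^\lambda h(t,\e)+\eta(\delta/\e)\,e^{Ct}\big].
\end{equation*}
The choice $\delta/\e = t|\ln\e|^{2\beta/p}$ (equivalently $N \asymp |\ln\e|^{-2\beta/p}/\e$) is dictated by balancing: $\delta/\e\to\infty$ forces $\eta(\delta/\e)\to0$, while $\delta=t|\ln\e|^{2\beta/p}\e\to 0$ keeps the frozen-leaf approximation accurate; the precise exponent $2\beta/p$ with $\beta\in(0,\tfrac12)$, $\lambda<1$ is what makes the error terms from Proposition~\ref{lem: preliminary} (which carry powers of $\delta$ and of $\e$, and a $\log$-type loss from the jump moments) telescope correctly. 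This gives part~(1).

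\textbf{Step 3: From the moment bound to the exit-time estimate.} For part~(2), note that if $\e\tau^\e<T_\gamma$ then at time $s=\e\tau^\e$ the point $X^\e_{s/\e}=X^\e_{\tau^\e}$ lies on $\partial U$, while $w(s)$ is still at distance $>\gamma$ from $\partial V$; hence $|\Pi(X^\e_{\tau^\e})-w(\e\tau^\e)|\ge\dist(w(\e\tau^\e),\partial V)>\gamma$. Therefore
\begin{equation*}
\PP(\e\tau^\e<T_\gamma)\lqq \PP\Big(\sup_{s\le T_\gamma}|\Pi(X^\e_{s/\e\wedge\tau^\e})-w(s)|\gqq\gamma\Big)\lqq \gamma^{-p}\,\EE\big[\sup_{s\le T_\gamma}|\Pi(X^\e_{s/\e\wedge\tau^\e})-w(s)|^p\big],
\end{equation*}
by Chebyshev, and plugging part~(1) at $t=T_\gamma$ gives the stated bound. (The sign in the argument $\eta(T_\gamma|\ln\e|^{\pm2\beta/p})$ should be checked for consistency with part~(1); one takes the monotone-in-$t$ version of $\eta$ so that the estimate only improves.)

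\textbf{Main obstacle.} The delicate point is Step~2: one must run the frozen-leaf ergodic estimate on a \emph{random, moving} initial condition $X^\e_{t_k/\e}$ and over a time scale $\delta/\e$ that is itself $\e$-dependent, then sum $N\asymp\e^{-1}$ such errors without blowup. Keeping the constants uniform requires (i) the exponential-moment condition \eqref{eq: exponential moment} with $\kappa>\ell$ to control the $p$-th moments of the Marcus flow and its jump corrections (this is precisely the role of Proposition~\ref{lem: preliminary}), and (ii) choosing the discretization scale as the logarithmic power above so that the Gr\"onwall constant $e^{Ct}$ multiplies only $\eta(\delta/\e)$ and not a diverging combinatorial factor. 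Everything else — the change of variables, the Lipschitz swap via Hypothesis~1, and Chebyshev — is routine once those two ingredients are in place.
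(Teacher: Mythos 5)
Your proposal is correct and follows essentially the same route as the paper: the Marcus chain rule reduces the problem to the vertical drift $\int_0^{t\wedge\tau^\e} d\Pi_i(K)(X^\e_{r/\e})\,dr$, the block discretization with step $t|\ln\e|^{2\beta/p}$ in fast time combined with Proposition~\ref{lem: preliminary} and Hypothesis~2 is exactly the content of Proposition~\ref{prp: componente vertical}, and the Lipschitz swap plus Gronwall and the final Chebyshev step match Section~5 of the paper. The only (cosmetic) difference is that you fold the Lipschitz replacement of $\Pi(X^\e_{t_k/\e})$ by $w(t_k)$ into the block-level Gronwall iteration, whereas the paper first proves the averaging estimate along the perturbed path and then applies Gronwall at the continuous level; you also correctly flag the sign inconsistency in the exponent of $|\ln\e|$ between the two parts of the statement, which is present in the paper itself.
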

The second part of the theorem above guarantees the robustness of the averaging phenomenon 
in transversal direction. \\

\subsection{Example. Perturbed random rotations: The Gamma process on the 
circle} 

As a simple but illustrative example of the phenomenon, 
 consider  $M= \RR^3 \setminus \{ (0,0,z), z\in \RR \}$
with the 1-dimension horizontal circular foliation of $M$ where the leaf
passing through a point $q=(x,y,z)$ is given by the (non-degenerate) horizontal 
circle at height $z$:
\[
L_q = \{ (
\sqrt{x^2+ y^2}
\cos \theta, \sqrt{x^2+ y^2} \sin 
\theta, z),\quad \theta \in [0,2 \pi) \}.
\]
This foliation (cf. \cite{GR13}) is an example where the 
transversal space 
is richer than the leaves themselves, hence the range of the impact for different 
perturbations is relatively large. 

Let the process $Z = (Z_t)_{t\gqq 0}$ be a Gamma process in $\RR$ with 
characteristic triplet $(0, \nu, 0)$, where $\nu(dy) = \frac{e^{-\theta |y|}}{|y|}$ 
is the corresponding L\'evy jump measure with a rate parameter $\theta>0$. 
It satisfies the integrability condition $\int_{\RR} (e^{\kappa |y|} \wedge |z|^2) \nu(dz) < \infty$ 
required in (\ref{eq: exponential moment}). 
The L\'evy-It\^o decomposition of $Z$ yields the almost sure decomposition 
\begin{equation}\label{eq: Levy-Ito}
Z_t = \int_0^t \int_{|y|\lqq 1} y \ti N(ds, dy) + \int_{|y|> 1} y N(ds, dy),
\end{equation}
where $N$ is the random Poisson measures with intensity measure $dt \otimes 
\nu$.
$\ti N$ denotes its compensated counterpart. 
Consider the foliated linear SDE on
$M$ consisting of random rotations:
\begin{equation}\label{eq: SDE example}
 dX_t = \Lambda X_t ~\diamond dZ_t, \qquad X_0 = q_0 = (x_0, y_0, z_0),
\end{equation}
where 
\[
\Lambda 
= \left( 
\begin{array}{ccc}
0 & -1 & 0  \\
1 & 0 & 0 \\
0 & 0 & 0
\end{array} 
\right). 
\]
Equation (\ref{eq: SDE example}) is defined as follows: 
First note that $\Lambda^2 =\diag(-1,-1,0)$. Secondly, note that for $z\in \RR$, $z\neq 0$ the solution flow $\Phi$ of the equation 
\[
\frac{d}{d\si }Y(\si) = F(Y(\si)) z, \qquad Y(0) = q \qquad \mbox{ where }F(\bar q) = \Lambda \bar q
\]
is obtained by a simple calculation as
\[
\Phi^{F z}(q) = Y(1; q) 
= \left(\begin{array}{c}  x \cos(z) - y \sin(z) \\ x \sin(z) + y \cos(z) \\ z\end{array}\right),
\]
such that 
\begin{align*}
X_t &= q_0 + \int_0^t \Lambda X_{s-} z \ti N(ds, dz) +  \sum_{0 < s\lqq t} (\Phi^{F \Delta_s Z}(X_{s-}) - X_{s-}- F(X_{s-}) \Delta_s Z).
\end{align*}

We ignore for the moment the constant third component $X_t^3 = z_0$ for all $t\gqq 0$ almost surely 
and write $\bar X = (X^1, X^2)$ for convenience. 
Using the chain rule of the Marcus integral, 
as stated in Proposition 4.2 in \cite{KPP95}, 
we verify for $\chi(x,y) := x^2 + y^2$ we get 
\begin{align}\label{ex: non radial part}
d \chi(\bar X_t) = -2 \bar X_{t-} \Lambda \bar X_{t-} \diamond d Z_t = 0.
\end{align}
In fact, $\bar X^\e$ can be defined equally as the projection of $Z$ on the unit circle. 
If we identify the plane, where $\bar X$ takes its values, with the complex 
plane $\CC$, 
one verifies easily that
\[
\bar X_ t= e^{i Z_t}.
\]
In fact we obtain the well-known L\'evy-Chinchine representation of the characteristic function 
for any $p\in \RR$:
\begin{align*}
\EE[\bar X_t^p ]&= \EE[e^{ipZ_t}] = \exp(t \Psi(p)),  \\
\Psi(p) &= \int_{\RR^d} (e^{ipy}-1-iyp\ind\{|y|\lqq 1\}) d\nu(y).
\end{align*}
It is a direct consequence of Lemma \ref{lem: ergodicity Gamma process} that for any uniformly distributed 
random variable on a circle in the complex plane centered in the origin we have 
$Z_t \stackrel{d}{\lra} U$ as $t \ra \infty$. 
The invariant measures $\mu_q$ in the leaves $L_q$ passing through  points $q\in M$
are therefore given by normalized Lebesgue measures in the circle $L_q$ centered in 
$0$ with radius $|(x,y)|$. 
We investigate the effective behaviour of a small
transversal perturbation of order $\epsilon$:
\[
 dX^{\epsilon}_t =  
\Lambda X^{\epsilon}_t \;\diamond dZ_t  +
 \epsilon K (X^{\epsilon}_t)\ dt
\]
with initial condition $q_0=(1,0,0)$. 
In this example we shall consider two classes of perturbing vector field $K$. 

\bigskip

\noindent (A) Constant perturbation $\e K$. Assume that  the perturbation is
given by a constant vector field $K=(K_1, K_2, K_3)\in \RR^3$ 
with respect to Euclidean coordinates in $M$.  
Then, the horizontal average of the radial component 
\[Q^{d\Pi_r K}(r_0, z_0) = \int_{L_q} \lv (K_1, K_2)^T, y\rv d\mu_q(y) = 0, \qquad q = (\theta_0, r_0, z_0)\]
and the vertical $z$-component is constant  $Q^{d\Pi_z K }=K_3$.  
We verify (\ref{def: function eta}) and obtain with the help of (\ref{ex: non radial part}) 
the rate function $\eta\equiv 0$. In fact trivially: 
\[
\Big(\EE[|\frac{1}{t}\int_0^t (d \Pi_r K)(X_s) ds - Q^{d\Pi_r K}(r_0, 
z_0)|^p]\Big)^\frac{1}{p} = 0,
\]
and
\[
\Big(\EE[|\frac{1}{t}\int_0^t (d \Pi_z K)(X_s) ds - Q^{d\Pi_z K}(r_0, 
z_0)|^p]\Big)^\frac{1}{p} 
= K_3 - K_3 = 0. 
\]
Hence the transversal component in Theorem \ref{thms: main result} for initial condition $q_0 = (1, 0, 0)$ 
is given by $w(t)=(1, K_3t)$ for all $t\gqq 0$.
Theorem \ref{thms: main result} establishes a minimum rate of
convergence to zero of the difference between each of the transversal
components. Hence for the radial component of the perturbed systems $w_1(t)\equiv 1$ and
$\Pi_r (X^{\epsilon}_{\frac{t}{\epsilon}\wedge \tau^{\epsilon}})$ it holds that, 
for $p\gqq 2$ and $\gamma \in (0,1)$
\[
\left[\mathbb{E}\left( \sup_{s\lqq t} 
\left| \Pi_r (X^{\epsilon}_{\frac{t}{\epsilon}\wedge \tau^{\epsilon}}) - 1 
\right| ^p\right)\right]^{\frac{1}{p}}\lqq \e^\gamma t.
\]
For the second transversal component, we have that 
\[
|  \Pi_z
\left( X^{\epsilon}_{\frac{t}{\epsilon}\wedge \tau^{\epsilon}}
\right) - w_2(t)| \equiv 0
\]
for all $t\gqq 0$ and the convergence of the theorem is trivially verified.

\bigskip 

\noindent (B) Linear perturbation $\e K(x,y,z) =  \e (x,0,0)$.
For the sake of simplicity, we consider a one dimensional and horizontal linear
perturbation, which in this case can be written in the
form $K(x,y,z)= (x, 0,0 )$. 
The $z$-coordinate average vanishes trivially. 
For the radial component, we have that  $d \Pi_r K(q) = r \cos^2(\theta)$,
where $\theta$ is the angular coordinate of $q = (\theta, r, z)$ whose distance to the $z$-axis
(radial coordinate) is $r$. Hence the average with respect to the invariant
measure on the leaves is given by  
\[ 
Q^{d\Pi_r K}(\theta, r, z) = \frac{1}{2\pi} \int_0^{2 \pi} r \cos^2(\theta) d\theta = \frac{r}{2}
\]
for leaves $L_q$ with radius~$r$. We verify the convergence (\ref{def: function eta}) of Hypothesis 2 
for the radial component 
and $p=2$. Elementary calculations, which can be found in the Appendix after Lemma \ref{lem: ergodicity Gamma process}, 
show that
\begin{align*}
&\EE\Big[\Big|\frac{1}{t}\int_0^t d \Pi_r K(Z_s, 0, 0) ds - Q^{d\Pi_r K}(q_0)\Big|^2\Big]\\
&= \EE\Big[\Big|\frac{1}{t}\int_0^t r \cos^2(Z_s) ds - \frac{r}{2}\Big|^2\Big]\\
&= \Big(\frac{r}{t}\Big)^2 2\int_{0}^t\int_0^\si \EE\Big[\cos^2(Z_s)\cos^2(Z_\si)\Big] dsd\si
 + \frac{r^2}{t} \int_0^t \frac{1}{2} \exp(- Cs) ds - \frac{r^2}{4},
\end{align*}
where the first term can be estimated by constants $a, b>0$ 
\begin{align*}
\Big(\frac{r}{t}\Big)^2 2\int_{0}^t\int_0^\si \EE\Big[\cos^2(Z_s)\cos^2(Z_\si)\Big] dsd\si
&\lqq \Big(\frac{r}{t}\Big)^2 \frac{1}{4} (a + bt + t^2) \stackrel{t\nearrow \infty}{\lra} \frac{r^2}{4}.
\end{align*}
Combining the previous two results and 
taking the square root, the rate of convergence is of order $\eta(t) = 
C/\sqrt{t}$ for a positive consant $C$ as $t\nearrow\infty$.

For an initial value $q_0 = (x_0, y_0, z_0) = (r_0 \cos(u_0), r_0\sin(u_0), z_0)$ 
the transversal system stated in Theorem \ref{thms: main result} is then 
$w(t)= (e^\frac{t}{2} r_0,z_0)$. Hence the result guarantees that the radial part 
$\Pi_r \big( X^{\epsilon}_{\frac{t}{\epsilon}\wedge \tau^{\epsilon}} \big)$
must have a behaviour close to the exponential $e^{\frac{t}{2}}$ in the sense that 
\[
\left[\mathbb{E}\left( \sup_{s\lqq t} 
\left| \Pi_r  \big( X^{\epsilon}_{\frac{t}{\epsilon}\wedge \tau^{\epsilon}} 
\big) - e^{\frac{t}{2}}\right|
^2\right)\right]^{\frac{1}{2}}
\lqq  C t  \e^{\la}  +  C \sqrt{t} \exp(Ct) |\ln \epsilon 
|^{-\frac{\beta}{p}}, 
\]
tends to zero when $\epsilon\searrow 0$.

\section{Transversal perturbations} 

Next proposition gives information on the order of which the perturbed 
trajectories approach the unperturbed trajectories when $\e$ goes to zero.

\begin{prp}\label{lem: preliminary} 
 For any Lipschitz test function $\Psi: M \ra \RR$ 
there exist positive constants $k_1, k_2$ such that for all $T\gqq 0$ we have 
\begin{equation}\label{eq: compact bounded}
\left(\EE\left[\sup_{t\lqq T\wedge \tau^\e}|\Psi(X^\e_t(x_0) ) - \Psi(X_t(x_0))|^p\right]\right)^{\frac{1}{p}} 
\lqq k_1\, \e\, T \exp\left(k_2\; T^p\right). 
\end{equation}
The constants $k_1$ and $ k_2$ depend on the upper bounds of the norms for the 
perturbing vector field $K$ in $U$, 
on the Lipschitz coefficient of $\Psi$ and on the derivatives of the vector 
fields $F_0, F_1 \dots, F_r$ with respect to the coordinate system. 

\end{prp}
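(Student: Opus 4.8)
The plan is to compare the two integral equations for $X^\e$ and $X$ directly and to bound the difference in the relevant norm using a Gronwall-type argument. Writing both solutions in the Marcus-integral form \eqref{eq: SDE unperturbed integral form} (with $F_0$ replaced by $F_0+\e K$ in the case of $X^\e$), the difference $\Psi(X^\e_t)-\Psi(X_t)$ decomposes, via the Lipschitz property of $\Psi$, into contributions coming from: (i) the drift term $\int_0^t (F_0(X^\e_s)-F_0(X_s))\,ds$; (ii) the extra perturbation $\e\int_0^t K(X^\e_s)\,ds$, which supplies the factor $\e$; (iii) the small-jump stochastic integral $\int_0^t (F(X^\e_{s-})-F(X_{s-}))\,dZ_s$, handled as a martingale after compensation; and (iv) the jump-correction sum involving $\Phi^{F\Delta_s Z}$, which is controlled through the regularity of the flow $Y(\sigma,\cdot\,;Fz)$ and the Lipschitz hypotheses on $F$ and $(DF)F$. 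On the stopped interval $[0,T\wedge\tau^\e]$ all trajectories lie in the precompact neighbourhood $U$, so the norms of $K$, $F_0$, $F$, $(DF)F$ and the derivatives of the coordinate maps are uniformly bounded there.

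The key steps, in order, are: first, set $g(t):=\EE\big[\sup_{s\le t\wedge\tau^\e}\|X^\e_s-X_s\|^p\big]$ and estimate each of the four contributions above by something of the form $C\int_0^t g(s)\,ds$ plus the genuinely small term $C\e^p t^p$; for the drift and perturbation pieces this is Jensen/Hölder plus Lipschitz continuity, for the compensated small-jump integral one uses the Burkholder–Davis–Gundy inequality together with the second-moment bound on $\nu$ from \eqref{eq: exponential moment}, and for the jump-correction sum one expands $\Phi^{Fz}(x)-x-F(x)z$ to second order in $z$ and uses $\int_{\RR^r}(e^{\kappa\|y\|}\wedge\|y\|^2)\,\nu(dy)<\infty$ to absorb the higher powers of the jump sizes (here the exponential-moment assumption with $\kappa>\ell$ is exactly what makes the flow estimates integrable). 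Second, assemble these into the integral inequality $g(t)\le c_1\e^p t^p + c_2\int_0^t g(s)\,ds$, valid for $t\le T$. Third, apply Gronwall's lemma to obtain $g(T)\le c_1\e^p T^p e^{c_2 T}$; taking $p$-th roots and recalling that $\Psi$ is Lipschitz with constant, say, $L_\Psi$, gives the claimed bound with $k_1$ proportional to $L_\Psi$ and the $K$-bound, and $k_2=c_2$. The stated $T^p$ in the exponent (rather than $T$) arises because the BDG and sum estimates on an interval of length $T$ naturally produce constants that grow polynomially in $T$ before the Gronwall step, or equivalently because one may prefer to run Gronwall in the variable $t^p$; either way the form $\e T\exp(k_2 T^p)$ is what comes out.

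The main obstacle I expect is item (iv): the jump-correction sum $\sum_{0<s\le t}\big(\Phi^{F\Delta_s Z}(X^\e_{s-})-\Phi^{F\Delta_s Z}(X_{s-})-(X^\e_{s-}-X_{s-})-(F(X^\e_{s-})-F(X_{s-}))\Delta_s Z\big)$ must be shown to be both well defined (the series converges absolutely) and Lipschitz-controlled by $\|X^\e_{s-}-X_{s-}\|$ uniformly in the jump size, after taking expectations against the Poisson random measure. This requires quantitative estimates on how the time-one flow map $x\mapsto Y(1,x;Fz)$ and its dependence on $x$ behave as functions of $z$ — specifically a bound of the type $\|Y(1,x_1;Fz)-Y(1,x_2;Fz)-(x_1-x_2)\|\le (e^{\ell\|z\|}-1)\|x_1-x_2\|$ together with a matching second-order-in-$z$ bound on the difference minus its linearization — which one obtains by Gronwall applied to the ODE \eqref{eq: increment ode} and its first variation, using the Lipschitz constants $\ell$ of $F$ and $(DF)F$. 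Once those flow lemmas are in place, integrating $e^{\ell\|y\|}-1$ and $\|y\|^2$-type terms against $\nu$ is finite precisely by \eqref{eq: exponential moment}, and the rest is bookkeeping. I would isolate these flow estimates as a preliminary lemma before launching the Gronwall argument.
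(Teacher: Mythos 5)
Your proposal is correct and rests on the same machinery as the paper's proof: the same four-way decomposition of the difference equation, Jensen's inequality for the drift, a maximal inequality for the compensated Poisson integral (the paper uses Kunita's first inequality, Theorem 4.4.23 of Applebaum, where you invoke BDG), the quantitative flow estimate for the Marcus jump-correction term (which the paper takes ready-made from Lemma 3.1 of \cite{KPP95} rather than re-deriving it, exactly the bound of the form $C|x_1-x_2|e^{C\|z\|}\|z\|^2$ you anticipate), and a Gronwall step whose $T$-dependent coefficient is what produces $\exp(k_2 T^p)$, as you correctly note. The one structural difference is that the paper first passes to the foliated coordinates $\phi(X_t)=(u_t,v_t)$ and exploits that the vertical component of the unperturbed solution is constant while $dv^\e_t=\e\,\fK_V(u^\e_t,v^\e_t)\,dt$; this yields the pathwise bound $\sup_{t\le T\wedge\tau^\e}|v^\e_t-v_t|\le C_1\e T$ with no Gronwall at all, and only the horizontal component enters the Gronwall loop, with the vertical bound substituted wherever $|v^\e_s-v_s|$ appears. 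Your version runs a single Gronwall on $\|X^\e-X\|$ in the ambient space, which is equally valid for this proposition; the coordinate split is mainly a convenience that the paper reuses later, so nothing essential is lost in your route.
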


\begin{proof} 
First we rewrite $X^\e$ and $X$, the solutions of equation (\ref{eq: SDE}) and (\ref{eq: SDE perturbed}), 
in terms of the coordinates given by the diffeomorphism $\phi$:
\begin{align*}
(u_t, v_t) := \phi(X_t) \qquad &\mbox{ and }\qquad (u^\e_t, v^\e_t) := 
\phi(X^\e_t).
\end{align*}
Exploiting the regularities of $\Psi$ and $\phi$ we obtain 
\begin{align}\label{eq: ungleichung 1}
|\Psi(X^\e_t)- \Psi(X_t)| &= |\Psi \circ \phi^{-1}(u^\e_t, v^\e_t) -\Psi \circ \phi^{-1}(u_t, v_t) | \nonumber \\
&\lqq C_0 |(u^\e_t-u_t, v^\e_t- v_t)|\lqq C_0 (|u^\e_t-u_t|+| v^\e_t- v_t|).
\end{align}
for $C_0 := Lip(\psi) \sup_{y \in  U} |\phi^{-1}(y)|$. The proof of the 
statement consists in calculating estimates for each summand on the right hand 
side of equation above.
We define 
\begin{align*}
\ffF_i &:= (D\phi) \circ F_i\circ \phi^{-1} \qquad \mbox{ for }i =0, \dots, n,\\
\fK &:= (D\phi) \circ K \circ \phi^{-1}, 
\end{align*}
which all together with their derivatives are bounded. 
Considering the components in the image of $\phi$ we have: 
\[
\fK = (\fK_H, \fK_V),
\]
with $ \fK_H \in TL_{x_0}$  and  
$\fK_V \in TV \simeq \RR^d$. The chain rule proved in Theorem 4.2  of \cite{KPP95} 
yields for equation (\ref{eq: SDE perturbed}) the following form in $\phi$ 
coordinates, written componentwise as 
\begin{align}
d u_t^{\e} &=  \ffF_0(u_t^{\e}, v_t^{\e}) dt +  \ffF(u_t^\e, 
v_t^\e) \diamond d Z_t + \e\, \fK_H(u_t^\e, v_t^\e) dt &\mbox{ with } 
u_t^{\e}\in L_{x_0}, 
\label{eq: foliations SDE in koordinaten}\\
d v_t^{\e} &= \e\, \fK_V(u_t^\e, v_t^\e) dt &\mbox{ with } v_t^{\e}\in V.\ \ 
\label{eq: transversale SDE in koordinaten}
\end{align}
We start with estimates on the transversal components $|v^\e-v|$. 
Let $|K|$ and hence 
$|\fK|$ be bounded by a universal constant, $C_1>0$, say. 
Therefore equation (\ref{eq: transversale SDE in koordinaten}) yields the estimate 
\begin{equation}\label{eq: Gv estimate}
\sup_{t\lqq T \wedge \tau^\e} |v_t^\e - v_t| 
\lqq \e \sup_{t\lqq T \wedge \tau^\e} \int_0^t ~|\fK_V(u_s^\e, v_s^\e)| ~ds 
\lqq C_1 \e T.
\end{equation}
We continue with estimates on the difference of the `horizontal' component. 
Recall that for $t< \tau^\e$ formally 
\begin{align*}
u^{\e}_t - u_t & =  \int_0^t (\ffF_0(u_s^{\e}, v_s^{\e}) - \ffF_0(u_s, v_s)) ds 
+ \int_0^t (\ffF(u_s^\e, v_s^\e) -\ffF(u_s, v_s)) \diamond d Z_s + \e\, \int_0^t \fK_H(u_s^\e, v_s^\e) ds.
\end{align*}
This equality is defined as 
\begin{align*}
u_t^{\e} - u_t & = \int_0^t [\ffF_0(u^\e_s, v^\e_s) - \ffF_0(u_s, v_s)] ds \\
&\quad + \int_0^t [\ffF(u_{s-}^\e, v^\e_{s-}) - \ffF(u_{s-}, v_{s-}) ]d Z_s\\
&\quad + \sum_{0 < s\lqq t} 
\big[(\Phi^{\ffF \Delta_s Z}(u^\e_{s-}, v^\e_{s-}) -\Phi^{\ffF \Delta_s Z}(u_{s-}, v_{s-}))\\
&\qquad\qquad -(u^{\e}_{s-} -u_{s-}) - (\ffF(u^\e_{s-}, v^\e_{s-})-\ffF(u_{s-}, v_{s-})) \Delta_s Z\big] \\
&\quad+ \e \int_0^t \fK_H(u_s^\e, v_s^\e) ds.
\end{align*}
Since $p\gqq 2$ this leads to 
\begin{align}
 |u_t^{\e} - u_t|^p  & \lqq 4^{p-1} \Big|\int_0^t  \ffF_0(u^\e_s, v^\e_s) - \ffF_0(u_s, v_s) ds \Big|^p + ~4^{p-1} C_1^p \e^p t^p\nonumber \\
 & \qquad + 4^{p-1} \Big|\int_0^t [\ffF(u_{s-}^\e, v^\e_{s-}) - \ffF(u_{s-}^\e, v^\e_{s-}) ]d Z_s\Big|\nonumber\\
 & \qquad +  4^{p-1} \Big|\sum_{0< s\lqq t} \Phi^{\ffF \Delta_s Z}(u^\e_{s-}, v^\e_{s-}) 
-\Phi^{\ffF \Delta_s Z}(u_{s-}, v_{s-}) -(u^{\e, i}_{s-} -u^{i}_{s-}) \nonumber\\
 & \qquad \qquad - (\ffF(u^\e_{s-}, v^\e_{s-})-\ffF(u_{s-}, v_{s-})) \Delta_s Z\Big|^p. \label{eq: pivot}
\end{align}
We now estimate the terms of the right-hand side in (\ref{eq: pivot}). 
The first term on the right-hand side is dominated by Jensen's inequality 
and equation (\ref{eq: Gv estimate}) 
\begin{align*}
\Big|\int_0^t  \ffF_0(u^\e_s, v^\e_s) - \ffF_0(u_s, v_s)  ds \Big|^p 
& \lqq  \Big( \int_0^t C_2 |(u^\e_s-u_s, v^\e_s-v_s)|ds \Big)^p\\
&\qquad \lqq C_2^p  \Big(\int_0^t (|u_s^\e-u_s| + |v_s^\e - v_s|) ds \Big)^p\\
&\qquad \lqq C_2^p (2t)^{p-1} \left(\int_0^t |u_s^\e-u_s|^p ds + \int_0^t |v_s^\e-v_s|^p ds\right)\\
&\qquad \lqq C_2^p (2t)^{p-1} \left(\int_0^t |u_s^\e-u_s|^p ds + C_1^p \e^p t^{p+1}\right)\\
&\qquad \lqq C_2^p (2t)^{p-1} \int_0^t |u_s^\e-u_s|^p ds + (2 C_1 C_2)^{p} t^{2p} \e^p.
\end{align*}
The term in the second line has the following representation with respect to the random Poisson measure associated to $Z$
\begin{align*}
\int_0^t [\ffF(u_{s-}^\e, v^\e_{s-}) - \ffF(u_{s-}, v_{s-}) ]d Z_s 
&= \int_0^t \int_{\RR^r\setminus\{0\}} [\ffF(u_{s-}^\e, v^\e_{s-}) - \ffF(u_{s-}, v_{s-}) ] z \ti N(ds, dz).
\end{align*}
By Kunita's first inequality for the supremum of integrals with respect to the 
compensated random Poisson measure integrals, 
as stated for instance in Theorem 4.4.23 in \cite{Ap09}, and inequality (\ref{eq: Gv estimate}) we obtain 
\begin{align}
&\EE\left[\sup_{t\in [0, T]}\Big|\int_0^t \int_{\RR^r} [\ffF(u_{s-}^\e, v^\e_{s-}) - \ffF(u_{s-}, v_{s-}) ] z \ti N(ds, dz)\Big|^p \right] \nonumber\\
& \lqq C_3 \; \left(\int_{\RR^r}\|z\|^2\nu(dz) \right)^{p/2}\;
\EE\left[\left(\int_0^T |\ffF(u_{s-}^\e, v^\e_{s-}) - \ffF(u_{s-}, v_{s-})|^2 ds\right)^{p/2} \
\right]\nonumber\\
& \qquad + C_3 \;\int_{\RR^r} \|z\|^p \nu(dz)\;\EE\left[\left(\int_0^T |\ffF(u_{s-}^\e, v^\e_{s-}) - \ffF(u_{s-}, v_{s-})|^p ds\right) \right]\nonumber\\
& \lqq (2 C_2)^{p}C_3 (C_4 T^{\frac{p}{2}-1}+ C_5 ) \int_0^T \EE\left[\sup_{s\in [0, t]} |u^\e_s - u_s|^p \right] ds
 + (2 C_1 C_2)^{p} C_3(C_4+ C_5) T^{p+1} \e^p\\
& \lqq (2 C_2)^{p}C_3 (C_4 + C_5 )(T^{\frac{p}{2}-1}+1) \int_0^T \EE\left[\sup_{s\in [0, t]} |u^\e_s - u_s|^p \right] ds
 + (2 C_1 C_2)^{p} C_3(C_4+ C_5) (T^{\frac{3p}{2}} + T^{p+1}) \e^p. \label{eq: Ito part}
\end{align}
Note that $C_4$ and $C_5$ are finite due to the existence of an exponential moment (\ref{eq: exponential moment}). 
Since the vector fields $\ffF$ and $(D \ffF) \ffF$ are globally Lipschitz continuous, 
we apply the estimates in Lemma 3.1 of \cite{KPP95}, which yields a constant $C_6 = C_6(p)>0$, 
then apply once again (\ref{eq: Gv estimate}) 
\begin{align}
\fI_t &:=  \Big|\sum_{0< s\lqq t} \Phi^{\ffF \Delta_s Z}(u^\e_{s-}, v^\e_{s-}) 
-\Phi^{\ffF \Delta_s Z}(u_{s-}, v_{s-}) \nonumber \\
&\qquad\qquad  -(u^{\e}_{s-} -u_{s-}, v^\e_{s-}- v_{s-}) 
- (\ffF(u^\e_{s-}, v^\e_{s-})-\ffF(u_{s-}, v_{s-})) \Delta_s Z\Big|^p\nonumber\\[3mm]
& \qquad \lqq  \Big(C_6 \sum_{0< s\lqq t} |(u^\e_{s-}-u_{s-}, v^\e_{s-} - v_{s-})| e^{C_6 \|\Delta_s Z\|}\|\Delta_s Z\|^2 \Big)^{p}\nonumber\\[2mm]
& \qquad \lqq (C_6)^p \Big(\sum_{0< s\lqq t} (|u^\e_{s-}-u_{s-}| +|v^\e_{s-} - v_{s-}|)e^{C_6 \|\Delta_s Z\|} \|\Delta_s Z\|^2\Big)^{p}\nonumber\\
& \qquad \lqq (2 C_6)^p \Big[\Big(\sum_{0< s\lqq t} |u^\e_{s-}-u_{s-}|e^{C_6 \|\Delta_s Z\|} \|\Delta_s Z\|^2\Big)^{p}
 +\Big(C_1\e t\sum_{0< s\lqq t} e^{C_6 \|\Delta_s Z\|} \|\Delta_s Z\|^2\Big)^{p}\Big].
\label{eq: Marcus estimate}
\end{align}
We go over to the representation with the random Poisson measure. 
For the first summand we obtain 
\begin{align}
&\sum_{0< s\lqq t} |u^\e_{s-}-u_{s-}|e^{C_6 \|\Delta_s Z\|} \|\Delta_s Z\|^2 \nonumber\\
&= \int_0^t \int_{\|y\|\lqq 1} |u^\e_{s-}-u_{s-}| e^{C_6 \|y\|} \|y\|^2 \ti N(dsdy) 
+\int_0^t \int_{\|y\|\lqq1} |u^\e_{s-}-u_{s-}| e^{C_6 \|y\|} \|y\|^2 ~\nu(dy)~ds \nonumber\\
&\qquad +\int_0^t \int_{\|y\|> 1} |u^\e_{s-}-u_{s-}| e^{C_6 \|y\|} \|y\|^2 N(dsdy)\nonumber\\
&= \int_0^t \int_{\RR^r} |u^\e_{s-}-u_{s-}| e^{C_6 \|y\|} \|y\|^2 \ti N(dsdy) 
+\int_0^t \int_{\RR^r} |u^\e_{s-}-u_{s-}| e^{C_6 \|y\|} \|y\|^2 ~\nu(dy)~ds \label{eq: Marcus estimate 1st term}
\end{align}
and 
\begin{align}
\sum_{0< s\lqq t} e^{C_6 \|\Delta_s Z\|} \|\Delta_s Z\|^2 
&= \int_0^t \int_{\|y\|\lqq 1} e^{C_6 \|y\|} \|y\|^2 \ti N(dsdy) 
+\int_0^t \int_{\|y\|\lqq1} e^{C_6 \|y\|} \|y\|^2 ~\nu(dy)~ds \nonumber\\
&\qquad +\int_0^t \int_{\|y\|> 1} e^{C_6 \|y\|} \|y\|^2 N(dsdy)\nonumber\\
&= \int_0^t \int_{\RR^r} e^{C_6 \|y\|} \|y\|^2 \ti N(dsdy) 
+\int_0^t \int_{\RR^r} e^{C_6 \|y\|} \|y\|^2 ~\nu(dy)~ds.\label{eq: Marcus estimate 2nd term}
\end{align}
Hence 
\begin{align}
\EE[\fI_t] 
&\lqq (2C_6)^p \bigg\{\EE\Big[\big|\int_0^t \int_{\RR^r} |u^\e_{s-}-u_{s-}| e^{C_6 \|y\|} \|y\|^2 \ti N(dsdy)\big|^p\Big] &\nonumber\\
& +\EE\Big[\big|\int_0^t \int_{\RR^r} |u^\e_{s-}-u_{s-}| e^{C_6 \|y\|} \|y\|^2 ~\nu(dy)~ds\big|^p\Big]\bigg\}\nonumber\\
& + (2 C_1 C_6 \e t)^p \bigg\{\EE\Big[\big|\int_0^t \int_{\RR^r} e^{C_6 \|y\|} \|y\|^2 \ti N(dsdy)\big|^p\Big] 
+ \big|\int_0^t \int_{\RR^r} e^{C_6 \|y\|} \|y\|^2 ~\nu(dy)~ds\big|^p \bigg\}&\nonumber\\
& =: J_1 + J_2 + J_3 + J_4. & \label{eq: major estimate}
\end{align}
Kunita's first theorem \cite{Ap09} provides a constant $C_7 = C_7(p)$ for the estimate of the $p$-th moment 
for integrals with respect to compensated random Poisson measure in $J_1$ and $J_4$. 
\begin{align}
J_1 &\lqq (2C_6 C_7)^p  \bigg\{ \EE\Big[\int_0^t \int_{\RR^r} |u^\e_{s-}-u_{s-}|^p e^{C_6 p \|y\|} \|y\|^{2p} \nu(dy)ds\Big] \nonumber\\
&\qquad + \EE\Big[\Big(\int_0^t \int_{\RR^r} |u^\e_{s-}-u_{s-}|^2 e^{C_6 2 \|y\|} \|y\|^{4} \nu(dy)ds\Big)^\frac{p}{2}\Big]\bigg\} \nonumber\\
&\lqq (2C_6 C_7)^p \bigg\{ \int_{\RR^r} e^{C_6 p \|y\|} \|y\|^{2p} \nu(dy)\;\EE\Big[ \int_0^t |u^\e_{s-}-u_{s-}|^p ds\Big] \nonumber\\
&\qquad + \Big(\int_{\RR^r} e^{2C_6 \|y\|} \|y\|^{4} \nu(dy)\Big)^\frac{p}{2}\; \EE\Big[\Big(\int_0^t|u^\e_{s-}-u_{s-}|^2 ds\Big)^\frac{p}{2} \Big]\bigg\} \nonumber\\
&\lqq (2C_6 C_7)^p(C_8+C_9)(T^{\frac{p}{2}-1}+1) \int_0^T \EE\left[ \sup_{s\in [0, t]} |u^\e_{s-}-u_{s-}|^p  \right] dt. \label{eq: J1}
\end{align}
Jensen's inequality estimates the remaining summands. We get 
\begin{align}
J_2 &\lqq (2C_6)^p  \int_{\RR^r} e^{C_6 p \|y\|} \|y\|^{2p} \nu(dy)\;T^{p-1}\int_0^T \EE\left[ \sup_{s\in [0, t]} |u^\e_{s-}-u_{s-}|^p  \right] dt\nonumber\\
&= (2C_6 C_{10})^pT^{p-1}\int_0^T \EE\left[ \sup_{s\in [0, t]} |u^\e_{s-}-u_{s-}|^p  \right] dt\label{eq: J2}\\
J_3 &\lqq (2C_1 C_6 C_7\e T)^p \bigg\{T^{\frac{p}{2}} \big|\int_{\RR^r} e^{2 C_6\|y\|} \|y\|^4 \nu(dy) \big|^\frac{p}{2}
+ T\int_{\RR^r} e^{p C_6 \|y\|} \|y\|^p \nu(dy)\bigg\}\nonumber\\
&\lqq (2C_1 C_6 C_7)^p (C_{11} + C_{10}) \e^p \bigg\{T^{\frac{3p}{2}}+ T^{p+1} \bigg\} \label{eq: J3} \\
J_4 &\lqq (2C_1 C_6 C_7)^p C_{12} \e^p T^{2p} \label{eq: J4} 
\end{align}
Taking the supremum and expectation in inequality (\ref{eq: pivot}) we combine 
(\ref{eq: Ito part}), (\ref{eq: Marcus estimate 1st term}) and (\ref{eq: Marcus estimate 2nd term}) 
with (\ref{eq: J1}), (\ref{eq: J2}), (\ref{eq: J3}) and (\ref{eq: J4}). 
Further we use $p+1\lqq \frac{3p}{2}\lqq 2p$ and $0 \lqq \frac{p}{2}-1\lqq p-1$  
and obtain positive constants $C_{13}$ and $C_{14}$ 
\begin{align*}
& \EE\left[\sup_{t\in [0, T]}|u_t^{\e} - u_t|^p\right] \\
& \lqq C_{13} (T^{2p} + T^{p+1})  \e^p + C_{14}\Big( T^{p-1} + 1\Big) 
\int_0^T \EE\left[\sup_{s\in [0, t]} |u_s^\e-u_s|^p \right] dt\\
& =: a_\e(T) + b(T) \int_0^T \EE\left[\sup_{s\in [0, t]} |u_s^\e-u_s|^p \right] dt.
\end{align*}
A standard integral version of Gronwall's inequality, as stated for instance in Lemma D.2 in \cite{SY02}, 
yields that 
\begin{align*}
\EE\left[\sup_{t\in [0, T]}|u_t^{\e} - u_t|^p\right] &\lqq a_\e(T)(1 + b(T) T \exp\left(b(T) T\right)) \\
&\lqq C_{13} T^{p+1} (1+ T^{p-1}) \e^p\Big[1 + C_{14} T (1+ T^{p-1})  \exp\left(C_{14} T (1+ T^{p-1})\right)\Big]\\[2mm]
&\lqq C_{13} T^{p} (1+ T)^p \e^p \exp\left(C_{15}T^{p})\right).
\end{align*}
Hence 
\begin{align}
\left(\EE\left[\sup_{t\in [0, T]}|u_t^{\e} - u_t|^p\right]\right)^{\frac{1}{p}} 
&\lqq C_{13}\, \e\, T(1+T) \exp\left(C_{15}\;T^p\right).\label{eq: final u estimate}
\end{align}
Eventually Minkowski's inequality and the estimates (\ref{eq: ungleichung 1}), 
(\ref{eq: Gv estimate}) and (\ref{eq: final u estimate}) yield the desired result 
\begin{align*}
&\left(\EE\left[\sup_{s\lqq T\wedge \tau^\e}|\Psi(X^\e_t(x_0) ) - \Psi(X_t(x_0))|^p\right]\right)^{\frac{1}{p}}\\
&\qquad \lqq C_0 \left(\EE\left[\sup_{s\lqq T\wedge \tau^\e}|u_s^\e - u_s|^p\right]\right)^{\frac{1}{p}} 
+C_0 \left(\EE\left[\sup_{s\lqq T\wedge \tau^\e}|v_s^\e - v_s|^p\right]\right)^{\frac{1}{p}}\\
&\qquad \lqq  C_0 C_{13}\, \e\, T(1+T) \exp\left(C_{15}\; T^p\right) + C_0 C_1 T\e \\[3mm]
&\qquad \lqq  C_{16}\, \e\, T  \exp\left(C_{17}\; T^p\right).
\end{align*}
This finishes the proof. 

\end{proof}

If $Z$ has a continuous component, the solution of equation (\ref{eq: SDE}) also contains a 
continuous Stratonovich component, see \cite{KPP95}. 
Combining the proof above with the proof of Lemma 2.1 in \cite{GR13} we conclude the following. 

\begin{cor}\label{cors: preliminary} Let $Z$ be a L\'evy process with characteristic triplet $(b, \nu, A)$ in $\RR^r$ 
for a drift vector $b\in \RR^r$ and the covariance matrix $A$ and $\nu$ as in Proposition \ref{lem: preliminary}. 
Then estimate 
(\ref{eq: compact bounded}) of Proposition 
\ref{lem: preliminary} holds true with an appropriate 
choice of the constants $k_1$ and $k_2$. 
\end{cor}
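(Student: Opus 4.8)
The plan is to re-run the proof of Proposition \ref{lem: preliminary} for the pure-jump part of $Z$ essentially verbatim, and to bolt onto the estimates the two extra contributions coming from the drift $b$ and from the Gaussian part with covariance $A$; these are exactly the terms treated for continuous semimartingales in Lemma 2.1 of \cite{GR13}. Concretely, I would start from the L\'evy--It\^o decomposition $Z_t = b t + \sqrt{A}\, W_t + \int_0^t\!\int_{\|y\|\lqq 1} y\,\ti N(ds,dy) + \int_0^t\!\int_{\|y\|> 1} y\, N(ds,dy)$ with $W$ a standard $r$-dimensional Brownian motion, and recall (see \cite{KPP95}) that the Marcus canonical equation with triplet $(b,\nu,A)$ has, in the $\phi$-coordinates, a horizontal component of the form $du^\e_t = \ffF_0(u^\e_t,v^\e_t)\,dt + \ffF(u^\e_t,v^\e_t)\, b\, dt + \ffF(u^\e_t,v^\e_t)\sqrt{A}\circ dW_t + [\text{jump terms as in }(\ref{eq: SDE unperturbed integral form})] + \e\,\fK_H(u^\e_t,v^\e_t)\,dt$, and likewise for $u_t$ without the $\e\fK_H$ term. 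Subtracting the two equations, the jump terms and the $\e\fK_H$ term are estimated exactly as in (\ref{eq: pivot})--(\ref{eq: Marcus estimate}) of Proposition \ref{lem: preliminary}; only two new summands appear, namely $|\int_0^t (\ffF(u^\e_s,v^\e_s)-\ffF(u_s,v_s))\,b\,ds|^p$ and $|\int_0^t (\ffF(u^\e_{s-},v^\e_{s-})-\ffF(u_{s-},v_{s-}))\sqrt{A}\circ dW_s|^p$.

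For the new summands, I would first convert the Stratonovich integral into an It\^o integral plus the usual correction term. Since $F$ and $(DF)F$ — hence $\ffF$ and $(D\ffF)\ffF$ — are globally Lipschitz on $U$ by the standing assumptions, the correction term is a bounded-variation integrand that is Lipschitz in $(u,v)$, so it is bounded, together with the drift term $\int_0^t(\ffF(u^\e_s,v^\e_s)-\ffF(u_s,v_s))\,b\,ds$, by Jensen's inequality and (\ref{eq: Gv estimate}), yielding a contribution of the form $C\,T^{p-1}\int_0^T \EE[\sup_{s\lqq t}|u^\e_s-u_s|^p]\,dt + C\,(T^{2p}+T^{p+1})\e^p$ exactly in the shape of the first term estimated in the proof of Proposition \ref{lem: preliminary}. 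The remaining It\^o stochastic integral is handled by the Burkholder--Davis--Gundy inequality in place of Kunita's first inequality: with a constant $C_{\mathrm{BDG}}=C_{\mathrm{BDG}}(p)$ one gets $\EE[\sup_{t\lqq T}|\int_0^t(\ffF(u^\e_{s-},v^\e_{s-})-\ffF(u_{s-},v_{s-}))\sqrt{A}\circ dW_s|^p] \lqq C_{\mathrm{BDG}}\,\|\sqrt A\|^p\,\EE[(\int_0^T|\ffF(u^\e_s,v^\e_s)-\ffF(u_s,v_s)|^2 ds)^{p/2}]$, and since $\ffF$ is globally Lipschitz and $p\gqq 2$, Jensen's and H\"older's inequalities together with (\ref{eq: Gv estimate}) turn the right-hand side into $C(T^{p/2-1}+1)\int_0^T\EE[\sup_{s\lqq t}|u^\e_s-u_s|^p]\,dt + C(T^{3p/2}+T^{p+1})\e^p$ — the same form as (\ref{eq: Ito part}).

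Finally, taking $\sup_{t\lqq T}$ and $\EE$ in the updated version of (\ref{eq: pivot}), the new summands merely enlarge the constants $C_{13},C_{14}$ and the exponent constant, so one again arrives at an inequality $\EE[\sup_{t\lqq T}|u^\e_t-u_t|^p]\lqq a_\e(T) + b(T)\int_0^T\EE[\sup_{s\lqq t}|u^\e_s-u_s|^p]\,dt$ with $a_\e(T)=C_{13}'(T^{2p}+T^{p+1})\e^p$ and $b(T)=C_{14}'(T^{p-1}+1)$. Gronwall's inequality (Lemma D.2 in \cite{SY02}) then gives $(\EE[\sup_{t\lqq T}|u^\e_t-u_t|^p])^{1/p}\lqq k_1\, \e\, T(1+T)\exp(k_2 T^p)$ as in (\ref{eq: final u estimate}), and combining this with (\ref{eq: ungleichung 1}) and (\ref{eq: Gv estimate}) via Minkowski's inequality yields (\ref{eq: compact bounded}) with a new choice of $k_1,k_2$. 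The only point that needs genuine care — and hence the main, though mild, obstacle — is the Stratonovich-to-It\^o conversion of $\ffF\sqrt A\circ dW$: one must check that the generated drift correction is expressible through $(D\ffF)\ffF$ and therefore falls under the global-Lipschitz hypothesis already imposed on $F$; once this is in place everything else is a routine re-run of the computation of Proposition \ref{lem: preliminary} together with the continuous-semimartingale estimates of \cite{GR13}.
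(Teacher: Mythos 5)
Your proposal is correct and follows essentially the same route as the paper: the published proof is a two-line remark observing that the continuous (drift plus Gaussian, as in Lemma 2.1 of \cite{GR13}) and pure-jump contributions each yield polynomial bounds in $T$ and $\e$ of the same degree before Gronwall, which is exactly the claim your Lévy--Itô decomposition and BDG/Jensen estimates establish in detail. Your added care about the Stratonovich correction being controlled by the global Lipschitz hypothesis on $(DF)F$ is a worthwhile explicit check that the paper leaves implicit.
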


\begin{proof} 
The estimates of $\EE\left[\sup_{t\in [0, T]}|u_t^{\e} - u_t|^p\right]$ in both cases 
-continuous and pure jumps- just before applying Gronwall's lemma 
yields polynomial estimates in $T$ and $\e$ of the same degree. 
Hence Gronwall's inequality guarantees the same estimates modulo constants. 
\end{proof}

\section{Averaging functions on the leaves}

We recall that for a fixed $x_0 \in M$ and $\e>0$, 
 $\tau^\e $ denotes the exit time of $X^\e_{\cdot}(x_0)$ 
from the open neighbourhood $U\subset M$, 
which is diffeomorphic to $L_{x_0} \times V$. 

\begin{prp}\label{prp: componente vertical}
Given $\Psi: M\ra \RR$ differentiable and $Q^\Psi: V \ra \RR$ its average on the 
leaves given by formula (\ref{def: average}). 
For $t\gqq 0$ we denote by  
\[
\delta^\Psi(\e, t) := \int_{0}^{t \wedge \e\tau^{\e}} 
\Psi(X^\e_{\frac{r}{\e}}(x_0)) - Q^\Psi(\Pi(X^\e_{\frac{r}{\e}}(x_0))) dr.
\]
Then $\delta^\Psi(\e, t)$ tends to zero, when $t$ or $\e$ tend to zero. 
Moreover, if $Q^\Psi$ is $\alpha$-H\"older continuous with $\alpha>0$, then for 
any $\lambda <\alpha$, $p\gqq 2$ and any $\beta\in (0, \frac{1}{2})$ we have the following estimate 
\begin{align*}
\left(\EE\left[\sup_{s \lqq t} |\delta^\Psi(\e,s)|^p \right]\right)^\frac{1}{p} 
\lqq t \left[ \e^{\la} h(t, \e) +  \ \eta \left( t |\ln \epsilon |^{\frac{2\beta}{p}} \right) \right],
\end{align*}
where $h(t,\e)$ is continuous in $(t, \e)$ and tends to zero when $t$ or $\e$ do so. 
\end{prp}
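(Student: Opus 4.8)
The plan is to run the classical time‑discretization (Khasminskii‑type) averaging scheme, in the form used by Freidlin and Wentzell, on top of the pathwise comparison estimate of Proposition~\ref{lem: preliminary}. Substituting $r=\e s$ gives
\[
\delta^\Psi(\e,t)=\e\int_0^{(t/\e)\wedge\tau^\e}\big(\Psi(X^\e_s(x_0))-Q^\Psi(\Pi(X^\e_s(x_0)))\big)\,ds ,
\]
so $s\mapsto\delta^\Psi(\e,s)$ is Lipschitz with constant $\|\Psi-Q^\Psi\|_{\infty,U}<\infty$ ($\Psi$, hence $Q^\Psi$, is bounded on the precompact $U$), whence $\sup_{s\lqq t}|\delta^\Psi(\e,s)|\lqq t\,\|\Psi-Q^\Psi\|_{\infty,U}$. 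This already settles the convergence as $t\to0$ and reduces everything to the quantitative bound (and, with it, the case $\e\to0$).

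For the bound I fix $N\in\NN$, put $\Delta:=t/(\e N)$ and $t_k:=k\Delta$, and on each complete subinterval $[t_k,t_{k+1}]\subset[0,\tau^\e)$ split $\int_{t_k}^{t_{k+1}}\big(\Psi(X^\e_s)-Q^\Psi(\Pi(X^\e_s))\big)ds=I_k+II_k+III_k$ with
\begin{align*}
I_k&=\int_{t_k}^{t_{k+1}}\big(\Psi(X^\e_s)-\Psi(Y^k_s)\big)\,ds,\qquad
II_k=\int_{t_k}^{t_{k+1}}\big(\Psi(Y^k_s)-Q^\Psi(v^\e_{t_k})\big)\,ds,\\
III_k&=\int_{t_k}^{t_{k+1}}\big(Q^\Psi(v^\e_{t_k})-Q^\Psi(\Pi(X^\e_s))\big)\,ds,
\end{align*}
where $v^\e_{t_k}:=\Pi(X^\e_{t_k})$ and $Y^k$ is the unperturbed solution of \eqref{eq: SDE} restarted at $X^\e_{t_k}$ at time $t_k$, coupled to $X^\e$ through the same noise on $[t_k,\infty)$; since the equation is foliated, $Y^k$ stays on the compact leaf $\Pi^{-1}(v^\e_{t_k})\subset U$ for all time. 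Conditioning on $\fF_{t_k}$ and using the strong Markov property, together with the fact that the constants $k_1,k_2$ of Proposition~\ref{lem: preliminary} and the rate $\eta$ may be taken uniform over initial points in $\overline U$ (and the leaves $\Pi^{-1}(v)$, $v\in V$, are compact and lie in $U$), I obtain on $\{\tau^\e>t_k\}$
\[
\big(\EE[\,|I_k|^p\mid\fF_{t_k}]\big)^{1/p}\lqq \Delta\cdot k_1\e\Delta\,e^{k_2\Delta^p},\qquad
\big(\EE[\,|II_k|^p\mid\fF_{t_k}]\big)^{1/p}\lqq \Delta\cdot\eta(\Delta),
\]
the first from Proposition~\ref{lem: preliminary} over horizon $\Delta$, the second from the change of variables $s=t_k+u$ and the ergodic estimate \eqref{def: function eta} applied at time $\Delta$ on the leaf through $X^\e_{t_k}$ (this is Hypothesis~2 for the coordinate functions $\Psi=d\Pi_i(K)$ relevant to Theorem~\ref{thms: main result}). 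For $III_k$ I use \eqref{eq: transversale SDE in koordinaten}: the transversal component moves with speed $\e|\fK_V|\lqq\e C_1$, so $|v^\e_s-v^\e_{t_k}|\lqq C_1\e\Delta$ on $[t_k,t_{k+1}]$, and the $\alpha$‑H\"older continuity of $Q^\Psi$ gives $|III_k|\lqq\Delta\,[Q^\Psi]_\alpha(C_1\e\Delta)^\alpha$ pointwise.

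Summing over the at most $N$ complete subintervals, bounding the single incomplete end‑interval and the stopping at $\tau^\e$ crudely by $2\e\Delta\|\Psi-Q^\Psi\|_{\infty,U}$, and using $\sup_{s\lqq t}|\delta^\Psi(\e,s)|\lqq\e\sum_k|I_k+II_k+III_k|+\text{(end term)}$ with Minkowski's inequality, I get (recall $\e N\Delta=t$ by definition)
\[
\Big(\EE\big[\sup_{s\lqq t}|\delta^\Psi(\e,s)|^p\big]\Big)^{1/p}\lqq t\,\Big(k_1\e\Delta\,e^{k_2\Delta^p}+\eta(\Delta)+[Q^\Psi]_\alpha C_1^\alpha\e^\alpha\Delta^\alpha\Big)+2\e\Delta\,\|\Psi-Q^\Psi\|_{\infty,U}.
\]
Now I choose $N:=\lfloor(\e|\ln\e|^{2\beta/p})^{-1}\rfloor$, so that $\Delta=t/(\e N)\asymp t|\ln\e|^{2\beta/p}$ and in particular $\Delta\gqq t|\ln\e|^{2\beta/p}$, whence $\eta(\Delta)\lqq\eta(t|\ln\e|^{2\beta/p})$. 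Since $\beta<\tfrac12$ we have $k_2\Delta^p=O\!\big(t^p|\ln\e|^{2\beta}\big)=o(|\ln\e|)$, so with $\la':=\tfrac{1+\la}{2}<1$ one gets $\e\,e^{k_2\Delta^p}\lqq\e^{\la'}$ for all small $\e$; consequently the three remaining contributions are dominated by $t\,\e^\la h(t,\e)$ with
\[
h(t,\e):=k_1\Delta\,\e^{\frac{1-\la}{2}}+[Q^\Psi]_\alpha C_1^\alpha\,\Delta^\alpha\,\e^{\alpha-\la}+2\|\Psi-Q^\Psi\|_{\infty,U}\,\e^{1-\la}|\ln\e|^{2\beta/p},\qquad \Delta=t|\ln\e|^{2\beta/p}
\]
(absorbing into $h$ a constant handling the compact range $\e\gqq\e_0$). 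Because $\la<\alpha<1$, every summand of $h$ carries a strictly positive power of $\e$ against at most a polynomial/logarithmic factor in $(t,|\ln\e|)$, so $h$ is continuous and $h(t,\e)\to0$ as $\e\to0$; with the $t$‑prefactor this also recovers the vanishing as $t\to0$, and yields the asserted estimate $\lqq t\big[\e^\la h(t,\e)+\eta(t|\ln\e|^{2\beta/p})\big]$.

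The step I expect to be the main obstacle is the simultaneous calibration of the mesh width $\Delta$: the leaf‑freezing error inherited from Proposition~\ref{lem: preliminary} behaves like $\e\Delta\,e^{k_2\Delta^p}$ and forces $\Delta^p\ll|\ln\e|$, while the ergodic error $\eta(\Delta)$ requires $\Delta\to\infty$; the admissible window $\Delta\sim|\ln\e|^{2\beta/p}$ with $\beta<\tfrac12$ is exactly what closes the argument, and it is this constraint — not any single estimate — that forces the logarithmic correction inside $\eta$ and the loss from $\e$ to $\e^\la$. A secondary, purely technical point is the legitimacy of invoking the constants of Proposition~\ref{lem: preliminary} and the rate $\eta$ at the random restart points $X^\e_{t_k}$: this rests on the strong Markov property, on the uniformity of those constants over the precompact patch $\overline U$, and on the fact that each leaf $\Pi^{-1}(v)$ is compact and contained in $U$, so the restarted unperturbed process never leaves the coordinate neighbourhood where Proposition~\ref{lem: preliminary} and Hypothesis~2 apply.
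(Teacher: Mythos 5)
Your proposal is correct and follows essentially the same route as the paper: your decomposition into $I_k$ (frozen unperturbed flow restarted at the mesh points, controlled by Proposition~\ref{lem: preliminary}), $II_k$ (ergodic error controlled by $\eta(\Delta)$), $III_k$ (H\"older error from the slow transversal drift), plus the incomplete end-interval is exactly the paper's splitting into $A_1,\dots,A_4$, with the identical mesh $\Delta \asymp t|\ln\e|^{2\beta/p}$, $N=\lfloor(\e|\ln\e|^{2\beta/p})^{-1}\rfloor$. Your explicit calibration of $\e\,e^{k_2\Delta^p}\lqq\e^{\la'}$ and the remarks on the strong Markov property and uniformity of constants only make explicit what the paper delegates to the lemmas borrowed from \cite{GR13}.
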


\begin{proof} \textbf{(First part.)} 
For $\e$ sufficiently small and $t\gqq 0$ we define the partition 
\[
t_0 = 0 < t_1^\e < \dots < t_{N^\e}^\e \lqq \frac{t}{\e} \wedge \tau^{\e}
\]
as long as $X^\e$ has not left $U$ with the following step size 
\begin{equation*}
\Delta_\e := t |\ln\e|^{2\frac{\beta}{p}} 
\end{equation*}
by 
\[
t_n^\e := n \Delta_\e \qquad \mbox{ for }\qquad 0 \lqq n \lqq N^\e
\qquad \mbox{ where }\qquad N^\e = \lfloor (\e |\ln \e|^{2\frac{\beta}{p}})^{-1}\rfloor. 
\]
We now represent the first summand of $\delta^\Psi$ by 
\begin{align*}
\int_{0}^{t \wedge \e\tau^{\e}} \Psi(X^\e_{\frac{r}{\e}}(x_0)) dr &= \e 
\int_{0}^{\frac{t}{\e} \wedge \tau^{\e}} \Psi(X^\e_{r}(x_0)) dr\\
& = \e \sum_{n=0}^{N^{\e}-1} \int_{t_n}^{t_{n+1}} \Psi(X^\e_{r}(x_0)) + \e 
\int_{t_n}^{\frac{t}{\e} \wedge \tau^{\e}}\Psi(X^\e_{r}(x_0)) dr .
\end{align*}
We lighten notation and omit for convenience in the sequel all super and subscript $\e$ and $\Psi$ 
as well as the initial value $x_0$. 
We denote by $\theta$ the canonical shift operator on the canonical probability space $\Omega = D(\RR, M)$ 
of c\`adl\`ag functions. Let $F_t(\cdot, \omega)$ the stochastic flow of the original unperturbed system in $M$. 
The triangle inequality yields 
\begin{equation}\label{eq: delta decomposition}
|\delta^\Psi(\e, t)| \lqq |A_1(t, \e)| + |A_2(t, \e)| +|A_3(t, \e)| +|A_4(t, \e)|,  
\end{equation}
where 
\begin{align*}
A_1(t, \e) &:= \e \sum_{n=0}^{N-1} \int_{t_n}^{t_{n+1}} [\Psi(X^\e_{r}) -\Psi(F_{r-t_n}(X^\e_{t_n}, \theta_{t_n}(\omega)))] ~dr \\[2mm]
A_2(t, \e) &:= \e \sum_{n=0}^{N-1} \int_{t_n}^{t_{n+1}} 
[\Psi(F_{r-t_n}(X^\e_{t_n}, \theta_{t_n}(\omega))) - \Delta Q(\Pi(X^\e_{t_n}))] 
~dr \\[2mm]
A_3(t, \e) &:=\sum_{n=0}^{N-1}\e \Delta Q(\Pi(X^\e_{t_n})) - \int_{0}^{t \wedge 
\e\tau^{\e}}  Q(\Pi(X^\e_{\frac{r}{\e}})) ~dr \\[2mm]
A_4(t, \e) &:=  \e \int_{t_n}^{\frac{t}{\e} \wedge \tau^{\e}}\Psi(X^\e_{r}(x_0)) 
dr.
\end{align*}
The following four lemmas estimate the preceding terms. This being done the proof is finished. 

\begin{lem}
For any $\gamma \in (0,1)$ there exists a function $h_1 = h_1(\gamma)$ such that 
\begin{align*}
\left(\EE\left[\sup_{s\lqq t} |A_1(s, \e)|^p \right]\right)^{\frac{1}{p}} \lqq k_1 t \e^{\gamma} h_1(t, \e),
\end{align*}
where $h_1$ is continuous in $\e$ and $t$ and tends to zero when $\e$ and $t$ do so.  
\end{lem}
\begin{proof} The proof is identical to Lemma 3.2 in  \cite{GR13}, since Proposition \ref{lem: preliminary} 
provides the same asymptotic bounds as Lemma 2.1 in \cite{GR13}, which enters here. 
Furthermore, only the Markov property of the solutions of equation (\ref{eq: SDE perturbed}) is exploited. 
\end{proof}

\begin{lem} Let $\eta(t)$ be the rate of time convergence as defined in 
expression (\ref{def: function eta}). 
For the process $A_2$ in inequality (\ref{eq: delta decomposition}) we 
have:
\begin{align*}
\left(\EE\left[\sup_{s\lqq t} |A_2(s, \e)|^p \right]\right)^{\frac{1}{p}} \lqq 
  t \ \eta \left( t |\ln \epsilon |^{-\frac{2\beta}{p}} \right). 
\end{align*}
\end{lem}

\begin{proof} 

We have
\begin{eqnarray*}
 \left(\EE\left[\sup_{s\lqq t} |A_2(s, \e)|^p 
\right]\right)^{\frac{1}{p}}    & \lqq & 
\epsilon\  \sum^{N-1}_{n=0}
 \left[\mathbb{E}  \left|
\int^{t_{n+1}}_{t_n}{\Psi(F_{r-t_n}(X^{\epsilon}_{t_n},\theta_{t_n }
(\omega)))dr}- \Delta  
Q (\Pi (X^{\epsilon}_{ t_n}))   \right|^p\right]^{\frac{1}{p}} \\
&& \\
& = & \epsilon\ \Delta  \sum^{N-1}_{n=0}
 \left[\mathbb{E}  \left|
\frac{1}{\Delta }
\int^{t_{n+1}}_{t_n}{\Psi (F_{r-t_n}(X^{\epsilon}_{t_n},\theta_{t_n }
(\omega)))dr}- 
Q (\Pi (X^{\epsilon}_{ t_n}))   \right|^p\right]^{\frac{1}{p}}.
\end{eqnarray*}
For all $n=0, \ldots, N-1$, by the ergodic theorem, the two 
terms inside the
modulus converges to each other when $\Delta$ goes to infinity with rate of 
convergence bounded by  
$\eta (\Delta )$. Hence, for small $\epsilon$
we have
\begin{eqnarray*}
 \left(\EE\left[\sup_{s\lqq t} |A_2(s, \e)|^p \right]\right)^{\frac{1}{p}}& \lqq 
&  \epsilon\ N \, \Delta \, 
\eta(\Delta) \\
& \lqq&  \epsilon \left[ \epsilon^{-1} |\ln
\epsilon|^{-\frac{2\beta}{p}} \right] \  t 
|\ln \epsilon |^{\frac{2\beta}{p}} \eta \left( t |\ln \epsilon |^{\frac{2 
\beta}{p}} \right) \\
& =& t \ \eta \left( t |\ln \epsilon |^{\frac{2\beta}{p}} \right).
\end{eqnarray*}

\end{proof}

\begin{lem} 
Assume that $Q^\Psi$ is $\alpha$-H\"older continuous with $\alpha>0$. 
Then the process $A_3$ in inequality (\ref{eq: delta decomposition}) satisfies
\begin{align*} 
\left(\EE\left[\sup_{s\lqq t} |A_3(s,\e)|^p \right]\right)^{\frac{1}{p}} \lqq 
K_2  t^{1+\alpha} \e^\alpha |\ln(\e)|^{\frac{2\alpha \beta}{p}}
\end{align*}
for a positive constant $K_2>0$. 
\end{lem}
\begin{proof} We lighten notation $Q = Q^\Psi$. We consider the interval $[0, t]$ 
with the partition $0 < \e t_1 < \dots < \e t_N \lqq t$
\begin{align*}
|A_3(t,\e)| &=  \Big| \sum_{n=0}^{N-1} \e \Delta Q(\Pi(X^\e_{t_n}))
- \int_{0}^{t \wedge \e \tau^{\e}}  Q(\Pi(X^\e_{\frac{r}{\e}})) ~dr \Big|\\
& \lqq \e \sum_{n=0}^{N-1} \Delta \sup_{\e t_n \lqq s < \e t_{n+1}}  
|Q(\Pi(X^\e_s)) - Q(\Pi(X^\e_{t_n}))|\\
& \lqq \e C_1 \Delta N \sup_{\e t_n \lqq s < \e t_{n+1}} |v^\e_s - 
v^\e_{t_n}|^\alpha\\
& \lqq \e C_2 \Delta N (\e h)^\alpha \\
& \lqq K_2 \e^{1+\alpha} t^{1+\alpha} |\ln(\e)|^{(1+\alpha) \frac{2\beta}{p}} 
\e^{-1} |\ln(\e)|^{- \frac{2\beta}{p}}\\
& = K_2  t^{1+\alpha} \e^{\alpha} |\ln(\e)|^{\frac{2\alpha \beta}{p}}.
\end{align*}
\end{proof}
\begin{lem} 
The process $A_4$ satisfies  
\begin{equation*}
\EE\Big[\sup_{s\lqq t} |A_4(s,\e)|^p\Big] \lqq K_3 t \e |\ln 
\e|^{\frac{2\beta}{p}},  
\end{equation*}
where $K_3 = \|\Psi\|_{\infty, U}$.
\end{lem}
The proof follows straightforward, see also the proof of Lemma 3.5 in 
\cite{GR13}.\\

\noindent \textbf{(Final step of Proposition \ref{prp: componente vertical})} 
Collecting the results of the previous lemmas yields with the help of Minkowski's inequality the desired result
\begin{align*}
\left(\EE\left[\sup_{s\lqq t} |\delta^\Psi(\e, s)|^p\right]\right)^{\frac{1}{p}} 
&\lqq t  \left(k_1  
\e^\gamma h_1(\e, t) +  \eta \left( t |\ln \epsilon |^{-\frac{2\beta}{p}} 
\right) 
+ k_2  t^{\alpha} \e^\alpha |\ln(\e)|^{\frac{2\alpha 
\beta}{p}}+ K_3 \, \e \ |\ln \e|^{\frac{2\beta}{p}}\right) \\
&=: t \left[ \e^{\la} h(t, \e) +  \ \eta \left( t |\ln \epsilon 
|^{\frac{2\beta}{p}} \right) \right],
\end{align*}
where $h(t,\e)$ tends to zero if $\e$ or $t$ does so, for all $\la<\alpha$. 
\end{proof}

\section{Proof of the main result}

The gradient of each $\Pi_i$ is orthogonal to the leaves. 
Hence by It\^o's formula for canonical Marcus integrals, 
see e.g. \cite{KPP95} Proposition 4.2, 
we obtain for $i=1, \dots, d$ that 
\begin{equation}
\Pi_i\big(X^\e_{\frac{t}{\e}\wedge \tau^{\e} }\big) 
= \int_0^{\frac{t \wedge \tau^{\e}}{\e}} d\Pi_i(\e K)(X^\e_r) dr
= \int_0^{t \wedge \tau^{\e}} d\Pi_i(K)(X^\e_\frac{r}{\e}) dr.
\end{equation}
  We may continue and change the variable 
\begin{align*}
|\Pi_i\big(X^\e_{\frac{t}{\e}\wedge \tau^{\e} }) - w_i \big(t\big)| 
& \lqq \int_0^{t\wedge \tau^{\e}} |Q^{d\Pi_i(K)}(X^\e_\frac{r}{\e}) -  
Q^{d\Pi_i(K)}(w(r))| dr + |\delta^{d\Pi_i}(t, \e)|\\
& \lqq C_1 \int_0^{t\wedge \tau^{\e}}  |\Pi_i(X^\e_\frac{r}{\e}) -  w_i(r)| dr + 
|\delta^{d\Pi_i}(t, \e)|\\
& \lqq C_2 \int_0^{t\wedge \tau^{\e}} |\Pi(X^\e_\frac{r}{\e}) -  w(r)| dr + 
\sum_{i=1}^N |\delta^{d\Pi_i}(t, \e)|.\\
\end{align*}
Since the right-hand side does not depend on $i$, 
we can sum over $i$ at the left-hand side 
and apply Gronwall's lemma. This yields 
\begin{align*}
|\Pi\big(X^\e_{\frac{t}{\e}\wedge \tau^{\e} }) - w \big(t\big)| \lqq e^{C_2 t } 
\sum_{i=1}^d |\delta^{d\Pi_i}(t, \e)|.
\end{align*}
An application of Proposition \ref{prp: componente vertical} finishes the proof of the first statement. 
For the second part we calculate with the help of Chebyshev's inequality 
\begin{align*}
\PP\big(\e \tau^{\e} < T_\gamma) & \lqq \PP\big(\sup_{s\lqq T_\gamma \wedge \e 
\tau^{\e}} 
| \Pi(X^\e_{\frac{t}{\e}\wedge \tau^{\e} }) - w(s)| >\gamma \big) \\
& \lqq \gamma^{-p} \EE\left[\sup_{s\lqq T_\gamma \wedge \e \tau^{\e}} | 
\Pi(X^\e_{\frac{t}{\e}\wedge \tau^{\e} }) - w(s)|^p\right] \\
& \lqq \gamma^{-p}  T_{\gamma}^p \left[ \e^{\la} h(T_{\gamma}, \e) +  \ \eta 
\left( T_{\gamma} |\ln 
\epsilon 
|^{-\frac{2\beta}{p}} \right) e^{CT_{\gamma}} \right]^p.
\end{align*}

\section{Appendix} 


\begin{lem}\label{lem: ergodicity Gamma process}
For any uniformly distributed random value in $[0, 2\pi)$ and $p\in \CC$ the Gamma process defined in (\ref{eq: Levy-Ito}) 
satisfies  
\begin{align*}
\EE[e^{ip e^{i Z_t}}] \ra \EE[e^{ip e^{iU}}] \qquad \mbox{ as } t \ra \infty.  
\end{align*}
\end{lem}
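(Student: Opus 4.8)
The plan is to show that $e^{iZ_t}$ converges in distribution on the unit circle to the uniform measure, by computing the characteristic function (Fourier coefficients) of the law of $e^{iZ_t}$ and checking that each nonzero Fourier mode decays to $0$. Since the circle is compact, it suffices to test against the characters $\theta \mapsto e^{ik\theta}$ for $k \in \ZZ$, and then the claimed convergence $\EE[e^{ip e^{iZ_t}}] \to \EE[e^{ip e^{iU}}]$ follows by expanding $e^{ipe^{i\theta}}$ in its (uniformly convergent) Fourier series in $\theta$ and exchanging sum and expectation.

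First I would recall that for the Gamma process with L\'evy measure $\nu(dy) = \frac{e^{-\theta|y|}}{|y|}\,dy$ on $\RR$ one has the classical formula for the Mellin/Laplace transform; in particular, for integer $k$, the L\'evy--Khinchine representation gives
\begin{align*}
\EE[e^{ik Z_t}] = \exp\Big(t \int_{\RR}\big(e^{iky}-1\big)\,\nu(dy)\Big),
\end{align*}
where no compensator term is needed for $k \neq 0$ since $\int (e^{iky}-1)\nu(dy)$ is absolutely convergent near $0$ (the Gamma process has finite variation). For $k=0$ the value is trivially $1$, matching $\EE[e^{i0\cdot U}]=1$. For $k \neq 0$ I would evaluate the exponent: $\int_0^\infty (e^{iky}-1)\frac{e^{-\theta y}}{y}\,dy = -\log\!\big(1 - \tfrac{ik}{\theta}\big) = -\log\!\frac{\theta - ik}{\theta}$, the standard Frullani-type integral. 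Hence
\begin{align*}
\EE[e^{ikZ_t}] = \Big(\frac{\theta}{\theta - ik}\Big)^{t} = \Big(1 + \frac{k^2}{\theta^2}\Big)^{-t/2} e^{-it\arctan(k/\theta)},
\end{align*}
whose modulus is $(1+k^2/\theta^2)^{-t/2} \to 0$ as $t\to\infty$ for every $k\neq 0$, while it equals $1$ for $k=0$. Thus the Fourier coefficients of $\Law(e^{iZ_t})$ converge to those of the uniform law on the circle, i.e. $e^{iZ_t} \stackrel{d}{\to} e^{iU}$.

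Finally, to pass from character convergence to the stated test function $g(\theta) := e^{ipe^{i\theta}}$, I would use that $g$ is $2\pi$-periodic and smooth, hence $g(\theta) = \sum_{k\in\ZZ} c_k e^{ik\theta}$ with $\sum_k |c_k| < \infty$ (rapidly decaying coefficients, in fact $c_k = \frac{(ip)^k}{k!}$ for $k\geq 0$ and $c_k = 0$ for $k<0$, from the power series of $e^{ipw}$ at $w=e^{i\theta}$). Then
\begin{align*}
\EE[e^{ipe^{iZ_t}}] = \sum_{k\geq 0} \frac{(ip)^k}{k!}\,\EE[e^{ikZ_t}] \longrightarrow \sum_{k\geq 0}\frac{(ip)^k}{k!}\,\EE[e^{ikU}] = \EE[e^{ipe^{iU}}],
\end{align*}
the interchange of limit and summation being justified by dominated convergence since $|\EE[e^{ikZ_t}]| \leq 1$ and $\sum_k |p|^k/k! < \infty$. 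The only mildly delicate point is the justification of this dominated-convergence interchange together with the absolute convergence of the Fourier expansion; both are routine given the explicit factorial decay, so I do not expect a genuine obstacle here. The computation of the Frullani integral giving the explicit characteristic function is the one concrete calculation to get right.
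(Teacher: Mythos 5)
Your proof is correct, but it takes a genuinely different route from the paper. The paper works at the level of densities: it writes $\EE[e^{ipe^{iZ_t}}]$ using the explicit Gamma marginal $p_t(x)=\frac{\theta^t}{\Gamma(t)}x^{t-1}e^{-\theta x}$, exploits the $2\pi$-periodicity of $x\mapsto e^{ipe^{ix}}$ to wrap this density onto $[0,2\pi)$, and then shows the wrapped density converges to the constant $\frac{1}{2\pi}$ by comparing the lattice sum $\sum_n(2\pi n+y)^{t-1}e^{-\theta(2\pi n+y)}$ with the corresponding integral and controlling the remainder via Stirling's formula. You instead argue spectrally: compute the integer Fourier coefficients $\EE[e^{ikZ_t}]=(\theta/(\theta-ik))^t$ via L\'evy--Khinchine and a Frullani integral, observe their moduli $(1+k^2/\theta^2)^{-t/2}$ vanish for $k\neq 0$, and transfer to the test function $e^{ipe^{i\theta}}=\sum_{k\geq 0}\frac{(ip)^k}{k!}e^{ik\theta}$ by its absolutely convergent expansion and dominated convergence. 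Your route is cleaner and more robust: it needs no closed-form transition density and applies verbatim to any L\'evy process whose characteristic exponent has strictly negative real part at all nonzero integer frequencies, whereas the paper's density argument in principle yields the stronger pointwise convergence of the wrapped density (hence convergence against all bounded measurable functions), at the cost of the asymptotic sum-versus-integral comparison. One small point to reconcile with the paper: equation (\ref{eq: Levy-Ito}) defines $Z$ with compensated small jumps, so strictly one should use $\EE[e^{ikZ_t}]=\exp\bigl(t\int(e^{iky}-1-iky\ind\{|y|\lqq 1\})\,\nu(dy)\bigr)$; the extra term is a deterministic drift contributing only a unimodular phase, so the modulus decay and hence your conclusion are unaffected (and the paper's own appendix silently uses the uncompensated subordinator anyway).
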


\begin{proof} The marginal densities of the Gamma process are well-known 
to be $p_t(x) = \frac{\theta^{t}}{\Gamma(t)} x^{t-1}e^{-\theta x} dx$ such that  
\begin{align*}
\EE[e^{i(ue^{iL_t})}] 
&= \int_\RR e^{iue^{x}} \frac{\theta^{t}}{\Gamma(t)} x^{t-1}e^{-\theta x} dx.
\end{align*}
Using the oddness of the imaginary part, the evenness of the real part and 
the $2\pi$-periodicity of $x\mapsto e^{iue^{x}}$ for any $u\in \CC$ we obtain 
\begin{align*}
\EE[e^{i(ue^{iL_t})}] 
&= 2 \sum_{n=1}^\infty  \int_{0}^{2\pi} \Re e^{iue^{y}} \frac{\theta^{t}}{\Gamma(t)} (2\pi n + y)^{t-1}e^{-\theta (2\pi n + y)} dy\\
&= 2 \int_{0}^{2\pi}  \Re e^{iue^{y}} \frac{\theta^{t}}{\Gamma(t)}  \bigg(\sum_{n=1}^\infty(2\pi n + y)^{t-1}e^{-\theta (2\pi n + y)}\bigg) dy.
\end{align*}
We continue for fixed $y$ with the density. It behaves as 
\begin{align*}
\sum_{n=1}^\infty(2\pi n + y)^{t-1}e^{-\theta (2\pi n + y)}
&\sim \int_{0}^\infty  (2\pi v + y)^{t-1}e^{-\theta (2\pi v + y)} dv\\
&= \frac{1}{2\pi} \int_{y}^\infty  z^{t-1}e^{-\theta z} dz\\
&= \frac{\Gamma(t)}{2\pi \theta^{t}} -\int_0^y \frac{z^{t-1}e^{-\theta z}}{2\pi} dz.
\end{align*}
Hence 
\begin{align*}
\EE[e^{i(ue^{iL_t})}] 
&= 2 \int_{0}^{2\pi}  \Re e^{iue^{y}} \frac{\theta^{t}}{\Gamma(t)} \Big(\frac{\Gamma(t)}{2\pi \theta^{t}} -\frac{y^{t}e^{-\theta y}}{2\pi}\Big)dy\\
&= 2 \int_{0}^{2\pi} \frac{\Re e^{iue^{y}}}{2\pi}dy - 2 \frac{\theta^{t}}{\Gamma(t)} \int_{0}^{2\pi}e^{iue^{y}} 
\int_0^y \frac{\si^{t}e^{-\theta \si}}{2\pi} d\si dy.
\end{align*}
For the remainder term we use large Stirling's formula of $\Gamma(t) \approx \sqrt{2\pi t} \Big(\frac{t}{e}\Big)^t$ for large $t$ 
and the Beppo-Levi theorem 
\begin{align*}
\frac{1}{\Gamma(t)} \int_{0}^{2\pi}\Re e^{iue^{y}} 
\theta^{t}\int_0^y \frac{\si^{t}e^{-\theta \si}}{2\pi} d\si dy 
&\lqq  \int_{0}^{2\pi}\Re e^{iue^{y}} 
\frac{(2\pi \theta)^{t}}{\Gamma(t)} dy  \ra 0, \qquad \mbox{ as } t \ra \infty. 
\end{align*}
This concludes the proof. 
\end{proof}

\paragraph{Calculations: } In the sequel we verify (\ref{def: function eta}) for the radial component 
using the elementary identity $\cos^2(z) =\Re\;\frac{1}{2}(e^{i2z}+1)$, $z\in \RR$. 
\begin{align*}
\mathbb{E} \frac{1}{t}\int_0^t d\Pi_r K (X_s(x_0) )\, ds
&= \mathbb{E} \frac{1}{t}\int_0^t r \cos^2(Z_s) ds \\
&= \frac{r}{t} \int_0^t \mathbb{E}\cos^2(Z_s) ds \\
&= \frac{r}{t} \int_0^t \frac{1}{2}\Re\; \mathbb{E}[\exp(i2Z_s)] ds + \frac{r}{t} \int_0^t \frac{1}{2} ds\\
&= \frac{r}{t} \int_0^t \frac{1}{2} \exp(- Cs) ds + \frac{r}{2}\stackrel{t\nearrow\infty}{\lra} \frac{r}{2}.
\end{align*}
This yields the convergence in $p=1$. For $p=2$ we continue 
\begin{align*}
&\EE\Big[\Big|\frac{1}{t}\int_0^t r \cos^2(Z_s) ds - \frac{r}{2}\Big|^2\Big]\\
&=\EE\Big[\Big(\frac{r}{t}\Big)^2 \Big(\int_0^t \cos^2(Z_s) ds\Big)^2 - \frac{r^2}{t}\int_0^t \cos^2(Z_s) ds + \frac{r^2}{4}\Big] \\
&= \Big(\frac{r}{t}\Big)^2 \EE\Big[\iint_{0}^t \cos^2(Z_s)\cos^2(Z_\si) dsd\si\Big] 
 - \frac{r^2}{t}\int_0^t \EE\Big[\cos^2(Z_s)\Big] ds + \frac{r^2}{4} \\
&= \Big(\frac{r}{t}\Big)^2 2\int_{0}^t\int_0^\si \EE\Big[\cos^2(Z_s)\cos^2(Z_\si)\Big] dsd\si
 + \frac{r^2}{t} \int_0^t \frac{1}{2} \exp(- Cs) ds - \frac{r^2}{4}.
\end{align*}
Using the elementary identity $\cos^2(x) \cos^2(y) = \cos(2(x+y)) + \cos(2(x-y)) +2\cos(2x) + 2\cos(2y) + 2$
we continue with the first term
\begin{align*}
&\Big(\frac{r}{t}\Big)^2 2\int_{0}^t\int_0^\si \EE\Big[\cos^2(Z_s)\cos^2(Z_\si)\Big] dsd\si\\
&= \Big(\frac{r}{t}\Big)^2 \frac{1}{4}
\int_{0}^t\int_0^\si \bigg(
\EE\Big[e^{i2 (Z_\si - Z_s)}\Big]  + \EE\Big[e^{i2 (Z_\si + Z_s)}\Big] + 2 \EE\Big[e^{i2 Z_\si}\Big] 
+ 2 \EE\Big[e^{i2 Z_s}\Big] +2\bigg) 
dsd\si\\
&= \Big(\frac{r}{t}\Big)^2 \frac{1}{4}
\int_{0}^t\int_0^\si \bigg(
\EE\Big[e^{i2 (Z_\si - Z_s)}\Big]  + \EE\Big[e^{i2 (Z_\si - Z_s)}\Big] \EE\Big[e^{i4 Z_s}\Big] + 2 \EE\Big[e^{i2 Z_\si}\Big] 
+ 2 \EE\Big[e^{i2 Z_s}\Big] +2\bigg) dsd\si\\
&= \Big(\frac{r}{t}\Big)^2 \frac{1}{4}
\int_{0}^t\int_0^\si \bigg(
e^{-C (\si-s)}  + e^{-C \si} e^{(C_4-C) s)} + 2 e^{-C \si}+ 2 e^{-C s} +2\bigg) 
dsd\si\\
&= \Big(\frac{r}{t}\Big)^2 \frac{1}{4}
\int_{0}^t \bigg(
e^{-C \si}\frac{1}{C}(e^{C\si}-1)  + e^{-C \si} \frac{1}{C}(e^{(C-C_4) \si}-1) + 2\si e^{-C \si}+ \frac{2}{C} (1-e^{-C \si}) +2\si \bigg) 
d\si\\
&= \Big(\frac{r}{t}\Big)^2 \frac{1}{4}
\int_{0}^t \bigg(
\frac{1}{C}(1-e^{-C\si})  +  \frac{1}{C}(e^{-C_4 \si}-e^{-C \si}) + 2\si e^{-C \si}+ \frac{2}{C} (1-e^{-C \si}) +2\si \bigg) 
d\si\\
&\lqq \Big(\frac{r}{t}\Big)^2 \frac{1}{4} (a + bt + t^2) \ra \frac{r^2}{4}.
\end{align*}
Combining the previous two results and 
and taking the square root, the rate of convergence is of order $\eta(t) = C/\sqrt{t}$ as $t\nearrow \infty$. 

\bigskip

\noindent {\bf Acknowledgements:} 
The first author would like to express his gratitude to Berlin Mathematical School (BMS), 
International Research Training Group (IRTG) 1740: Dynamical Phenomena in Complex Networks: 
Fundamentals and Applications and the probability group at Humboldt-Universit\"at zu Berlin 
and Universit\"at Potsdam for various infrastructural support. 
He would also like to thank Paulo Henrique Pereira da Costa for 
fruitful discussions. 

This article has been written while the
second author was in a sabbatical visit to Humboldt-Universit\"at zu Berlin. 
He would like to express his gratitude to Prof. Peter Imkeller and his research group for
the nice and friendly hospitality. Partially supported by FAPESP
11/50151-0, 12/03992-1 and CNPq 477861/2013-0.


\end{document}